\theoremstyle{plain}
\newtheorem{thm}{Theorem}[section]
\newtheorem{lem}[thm]{Lemma}
\newtheorem{prop}[thm]{Proposition}
\theoremstyle{definition}
\newtheorem*{defn}{Definition}
\theoremstyle{remark}
\newcommand{\bbf}{\mathbb}
\newcommand{\bksl}{\setminus}
\newcommand{\sss}{{\bbf S}}
\newcommand{\aaa}{{\bbf A}}
\DeclareMathOperator{\diam}{diam}
\DeclareMathOperator{\inj}{\inj}
\DeclareMathOperator{\Out}{Out}
\begin{document}

\title[Projections of the sphere graph]{Projections of the sphere graph
to the arc graph of a surface}
\author{Brian H. Bowditch, Francesca Iezzi}
\address{Mathematics Institute, University of Warwick,
Coventry, CV4 7AL, Great Britain}
\date{26th July 2016}

\begin{abstract}
Let $ S $ be a compact surface, and $ M $ be the double of a handlebody.
Given a homotopy class of maps from $ S $ to $ M $ inducing an
isomorphism of fundamental groups, we describe a canonical uniformly
lipschitz retraction of the sphere graph of $ M $ to the arc graph
of $ S $.
We also show that this retraction is a uniformly bounded distance from
the nearest point projection map.
\end{abstract}

\maketitle

\section{Introduction}\label{SA}

Let $ H $ be a handlebody of genus $ g $, and $ M $ its double.
In other words, $ M $ is homeomorphic to a connected sum of $ g $ copies of
$ S^1 \times S^2 $.
(We refer to \cite{Hem} for general background on 3-manifolds.)
The \emph{sphere graph}, $ \sss = \sss(M) $, associated to
$ M $ can be defined as follows.
Its vertex set, $ V(\sss) $, is the set of homotopy classes of essential
2-spheres embedded in $ M $.
(A 2-sphere is ``essential'' if it does not bound a ball in $ M $.)
Two such spheres are deemed to be adjacent in $ \sss $ if they can be
homotoped to be disjoint in $ M $.
We endow $\sss$ with a path metric, $d_{\sss}$, by giving each edge length one.
It is not hard to see (by a surgery argument) that $ \sss $ is connected
\cite{Hat}.

The sphere graph has played a significant role in studying the
geometry of the outer automorphism group, $ \Out(F_g) $, of the
free group, $ F_g $, on $ g $ generators.
Note that $ \pi_1(M) \cong \pi_1(H) \cong F_g $.
Any element of $ \Out(F_g) $ can be realised by a self-homeomorphism
of $ M $ \cite{L2}, and it is not hard to see that this gives rise
to a cofinite action of $ \Out(F_g) $ on $ \sss $.
In fact, $ \sss $ is canonically isomorphic to the free-splitting graph
of $ F_g $ \cite{AS}.
A key result in the subject is that this graph is hyperbolic \cite{HanM}.
Another proof of this, directly using surgery on spheres,
has been given in \cite{HilH}.

It is also known that $ \sss $ has infinite diameter.
This is shown in \cite{HamH} by describing an isometric embedding of
the arc graph of a surface into $ \sss $.
The fact that the embedding is isometric is shown by defining a
1-lipschitz retraction to the image.
The latter construction however is not canonical.
In this paper, we show that one can define a coarsely lipschitz retraction
in a simple canonical way (see Theorem \ref{B6}).
This suffices to show that the image is quasiconvex in $ \sss $.
Another construction, by different methods, has recently been given
independently in \cite{F}.

To describe our construction, let $ S $ be any compact orientable surface,
with non-empty boundary,
$ \partial S $, and with fundamental group isomorphic to $ F_g $.
Thus, $ H \cong S \times [0,1] $.
An ``arc'', $ \alpha $, in $ S $ will be assumed to satisfy
$ \partial \alpha = \alpha \cap \partial S $, and homotopies thereof
will be assumed to slide the endpoints, $ \partial \alpha $, in $ \partial S $.
We say that $ \alpha $ is \emph{trivial} if it cuts of a disc of $ S $
(or equivalently, can be homotoped into $ \partial S $); otherwise
it is \emph{essential}.
The \emph{arc graph} $ \aaa = \aaa(S) $, is defined as follows.
Its vertex set, $ V(\aaa) $ is the set of homotopy classes of
essential arcs.
Two such arcs are deemed adjacent in $ {\aaa} $ if they can be
homotoped to be disjoint in $ S $.
The graph $\aaa$ is endowed
with a path metric, $d_{\aaa}$, by giving each edge length one.
The arc graph is known to be hyperbolic
(\cite{MS}, \cite{HenPW}, \cite {HilH}).
It also has infinite diameter (since it admits a coarsely lipschitz map
to the curve graph with cobounded image).

As in \cite{HamH}, we define a map, $ \iota : \aaa \longrightarrow \sss $,
as follows.
If $ \alpha $ is an arc in $ S $, then $ \alpha \times [0,1] $
is a disc in $ H $, which doubles to give a sphere, $ \sigma $,
in $ M $.
It is easily checked that if $ \alpha $ is essential in $ S $,
then $ \sigma $ is essential in $ M $.
Moreover, homotopic arcs give homotopic spheres, and clearly
disjoint arcs give disjoint spheres.
Therefore this gives rise to a map $ \iota $ of graphs.
In fact, $ \iota $ is also injective.
This is not hard to see directly, and will also follow from
Theorem \ref{B7} below.

We want to define a coarsely lipschitz map $ \phi : \sss \longrightarrow \aaa $
which is a left inverse (that is, $ \phi \circ \iota $ is the
identity).
The idea behind the construction is quite simple.
Note that we can embed $ S $ into $ M $ via,
$ S \cong S \times \{ 0 \} \subseteq S \times [0,1] \equiv H \subseteq M $.
This induces an isomorphism of fundamental groups.
Let $ \sigma $ be an embedded sphere in $ M $, which we take to
be in general position with respect to $ S $, so that $ \sigma \cap S $
is a collection of arcs and closed curves.
We assume that the number of arcs in $ \sigma \cap S $ is minimal
in the homotopy class of $ \sigma $.
We choose any component, $ \alpha $, of $ \sigma \cap S $ which is
an essential arc.
(It is easily seen that there must be at least one.)
We will show that, in fact, $ \alpha $ is well defined up to bounded
distance.
Also homotopically disjoint spheres can be realised to be simultaneously disjoint,
and to both intersect $ \partial S $ minimally.
In this way, we will get our desired map, $ \phi $.

To achieve this, we will shift perspective.
By a theorem of Laudenbach \cite{L1}, two essential embedded spheres in $ M $
are homotopic if and only if they are isotopic, or equivalently in this case,
ambient isotopic \cite{Hir}.
For the purposes of proving the above, one can therefore hold $ \sigma $
fixed and isotope $ S $ in $ M $.
In fact, it will be more convenient to allow singular surfaces and homotopy.
In other words, let $ f : S \longrightarrow M $ be a map inducing
an isomorphism of fundamental groups.
We assume $ f $ to be in general position with respect to $ \sigma $,
and such that $ |f^{-1}(\sigma) \cap \partial S| $ is minimal in the homotopy class
of $ f $.
Let $ \alpha $ be an essential arc component of $ f^{-1}(\sigma) \subseteq S $.
We aim to show that $ \alpha $ is well defined up to bounded distance in
$ \aaa $.
This will be based on a result in \cite{MS}, see Lemma \ref{B4} here.

We will then show:

\begin{thm}\label{A1}
For all $ \sigma, \sigma' \in V(\sss) $, we have
$ d_\aaa(\phi (\sigma), \phi (\sigma')) \le d_\sss(\sigma,\sigma') + 6 $.
\end{thm}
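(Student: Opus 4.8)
The plan is to detect an entire $\sss$--geodesic with a \emph{single} auxiliary surface, so that the bound propagates with slope exactly one and the additive error is incurred only at the two endpoints. Set $n=d_\sss(\sigma,\sigma')$ and fix a geodesic $\sigma=\sigma_0,\sigma_1,\dots,\sigma_n=\sigma'$ in $\sss$. First I would realise the $\sigma_i$ as embedded spheres in $M$ with $\sigma_{i-1}$ disjoint from $\sigma_i$ for every $i$: realise $\sigma_0$, then inductively realise $\sigma_i$ disjointly from the already--placed $\sigma_{i-1}$ and apply an ambient isotopy to restore the positions of the earlier spheres, which is harmless since an ambient isotopy preserves the disjointness already arranged; here one uses Laudenbach's theorem \cite{L1} that homotopic embedded spheres in $M$ are ambient isotopic. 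Next, fix a map $f\colon S\to M$ inducing an isomorphism of fundamental groups and put it in general position with respect to all of the $\sigma_i$ at once. Writing $\Gamma_i=f^{-1}(\sigma_i)$, a one--submanifold of $S$, disjointness of $\sigma_{i-1}$ and $\sigma_i$ gives $\Gamma_{i-1}\cap\Gamma_i=\emptyset$.

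The next step is to recall (as noted in the introduction) that each $\Gamma_i$ contains an essential arc: a circle component of $\Gamma_i$ essential in $S$ would be null--homotopic in $M$ (it bounds a disc in the sphere $\sigma_i$), contradicting injectivity of $f_*$; and if $\Gamma_i$ consisted only of $\partial$--parallel arcs and disc--bounding circles, then $f$ could be pushed off $\sigma_i$, forcing $f_*\pi_1(S)$ into the fundamental group of a component of $M\setminus\sigma_i$ --- a proper subgroup of $\pi_1(M)$ (a proper free factor, or the smaller free group obtained by cutting a non--separating sphere) --- and contradicting surjectivity of $f_*$. Choosing essential arcs $\alpha_i\subseteq\Gamma_i$, consecutive ones are disjoint, hence adjacent or equal in $\aaa$, so $d_\aaa(\alpha_{i-1},\alpha_i)\le 1$ and therefore $d_\aaa(\alpha_0,\alpha_n)\le n$.

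Finally I would invoke Lemma~\ref{B4}. Although $f$ need not realise $\sigma_0$ (respectively $\sigma_n$) with a minimal number of boundary intersections, that lemma controls all essential arc components of $f^{-1}(\sigma_0)$ simultaneously, placing $\alpha_0$ within a uniform distance --- three, after the bookkeeping (one step to a minimal--position arc, then the diameter bound of Lemma~\ref{B4}) --- of $\phi(\sigma)$, and likewise $\alpha_n$ within three of $\phi(\sigma')$. The triangle inequality then gives $d_\aaa(\phi(\sigma),\phi(\sigma'))\le 3+n+3=n+6$. The heart of the matter, and the main obstacle, is this last step: one must show that the essential arc components of $f^{-1}(\sigma)$, as $f$ ranges over $\pi_1$--isomorphisms in general position with respect to $\sigma$, form a subset of $\aaa$ of uniformly bounded diameter. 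This is exactly the content of the Masur--Schleimer type Lemma~\ref{B4}; I expect its proof to proceed by surgery on arcs, using in an essential way that $\sigma$ is a \emph{sphere}, so that arcs lying on different ``sheets'' of $f^{-1}(\sigma)$ can be joined through $\sigma$, together with the standard fact that an arc obtained by surgering one arc along another remains at distance at most one from both.
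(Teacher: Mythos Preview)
Your idea of threading a \emph{single} map $f$ through the whole geodesic is natural, and the argument that each $\Gamma_i=f^{-1}(\sigma_i)$ contains an essential arc is correct. The gap is in the final step. Lemma~\ref{B4} requires that $\partial\Delta$ meet both $\partial S\times\{0\}$ and $\partial S\times\{1\}$ \emph{minimally}; this is exactly the efficiency hypothesis, and it is how Theorem~\ref{B3} deploys the lemma. Your map $f$ is merely in general position, not efficient with respect to $\sigma_0$ or $\sigma_n$, so Lemma~\ref{B4} does not apply, and there is no ``one step to a minimal-position arc'': homotoping a non-efficient $f$ toward efficiency by cancelling inefficient bigons can merge and split essential arcs, moving them an uncontrolled distance in $\aaa$. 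What you would actually need --- that for \emph{every} $f$ in the homotopy class, all essential arcs of $f^{-1}(\sigma)$ lie in a bounded neighbourhood of $\Phi(\sigma)$ --- is strictly stronger than Theorem~\ref{B3} and is nowhere established in the paper.

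The paper's argument instead changes the map at each step. Starting with $f_0$ efficient with respect to $\sigma_0$ and $\alpha_0=\phi(\sigma)$ on the nose, Lemma~\ref{B6} supplies $f_1$ efficient with respect to $\sigma_1$ while agreeing with $f_0$ on the preimage of $\sigma_0$; hence one may choose $\alpha_1\in\Phi(\sigma_1)$ disjoint from $\alpha_0$. Iterating produces $\alpha_0,\dots,\alpha_{n-1}$ with every $\alpha_i\in\Phi(\sigma_i)$, so the diameter bound of Theorem~\ref{B3} is invoked only once, for the pair $\alpha_{n-1},\phi(\sigma')\in\Phi(\sigma_{n-1}\cup\sigma')$, giving $(n-1)+7=n+6$. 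Even if you repaired the single-map approach by choosing $f$ efficient with respect to both endpoints simultaneously (which is possible, cf.\ Lemma~\ref{C5}), you would have to invoke Theorem~\ref{B3} at each end and would obtain only $n+14$.
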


This implies that $ \iota \aaa $ is quasi-convex in $ \sss $.
In fact, a refinement of the argument will recover the statement
of \cite{HamH} that $ \iota $ is an isometric embedding
(see Theorem \ref{B7} here).

Note that, while $ \phi $ is shown to be well defined up to bounded
distance, it involved making a choice.
At the cost of making it multi-valued, it can be made canonical.
In particular, given $ \sigma \in V(\sss) $, we can canonically define
$ \Phi(\sigma) \subseteq \aaa $ to be the set of all arcs that
can arise from any choice of such $ f $.

We remark that the construction in \cite{HamH} is related.
However their choice of allowable maps $ f : S \longrightarrow M $ is
more restrictive.
In particular, it makes reference to a preferred arc system, and therefore
is not canonical.
Nevertheless, since it is a case of our more general construction, it follows
from the results here that it is canonical up to bounded distance.

In Section \ref{section 3}, we will relate this to nearest point projection.

Given $ \sigma \in V(\sss) $,
let $ \Pi(\sigma) = \{ \sigma' \in \iota {\aaa} \mid
d_{\sss}(\sigma,\sigma') = d_{\sss}(\sigma,\iota \aaa) \} $.
In other words, $ \Pi $ is the coarse nearest-point projection to
$ \iota \aaa $.
As with any (quasi)convex subset of a hyperbolic space, we know that
$ \Pi(\sigma) $ has bounded diameter.
Moreover, if $ \sigma,\sigma'\in V(\sss) $ are adjacent, then
$ \diam(\Pi(\sigma) \cup \Pi(\sigma')) $ is bounded,
i.e.\ $ \Pi $ is coarsely lipschitz.
Here, the constants only depend on the genus, $ g $, of $ M $.

Denote the map $ \iota \circ \Phi :\sss \longrightarrow \sss$ by $\Psi$.
In fact, we show that $ \Psi $ and $ \Pi $ agree up to bounded distance:

\begin{thm}\label{A2}
Given any $ \sigma \in V(\sss) $, $ \diam(\Psi(\sigma) \cup \Pi(\sigma)) $
is bounded above in terms of $ g $.
\end{thm}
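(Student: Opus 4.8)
The plan is to reduce Theorem \ref{A2} to the single estimate
\[
d_\sss(\sigma,\iota\phi(\sigma)) \;\le\; d_\sss(\sigma,\iota\aaa) + C_0,
\]
valid for every $\sigma\in V(\sss)$, every choice of representative $\phi(\sigma)$ of $\Phi(\sigma)$, and some $C_0=C_0(g)$, and then to establish this estimate by a surgery argument exploiting the minimality built into the definition of $\phi$. To see that it suffices: $\Phi(\sigma)$ is well defined up to bounded distance, so $\Psi(\sigma)=\iota\Phi(\sigma)$ has diameter bounded in terms of $g$; and $\Pi(\sigma)$ also has bounded diameter, since $\iota\aaa$ is quasi-convex in the hyperbolic graph $\sss$. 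Hence it is enough to bound $d_\sss(\iota\phi(\sigma),\sigma')$ for one $\phi(\sigma)$ and one $\sigma'=\iota\beta\in\Pi(\sigma)$. Put $n=d_\sss(\sigma,\sigma')=d_\sss(\sigma,\iota\aaa)$ and, using that $\iota$ is an isometric embedding (Theorem \ref{B7}), $\ell=d_\aaa(\phi(\sigma),\beta)=d_\sss(\iota\phi(\sigma),\sigma')$.

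Choose the $\sss$-geodesic $[\iota\phi(\sigma),\sigma']$ to be the $\iota$-image of an $\aaa$-geodesic from $\phi(\sigma)$ to $\beta$ (possible since $\iota$ is isometric); then it lies in $\iota\aaa$, so each of its points is at distance at least $n$ from $\sigma$, and in particular $d_\sss(\sigma,[\iota\phi(\sigma),\sigma'])\ge n$. On the other hand, by $\delta$-hyperbolicity of $\sss$ we have $d_\sss(\sigma,[\iota\phi(\sigma),\sigma'])\le(\iota\phi(\sigma)\mid\sigma')_\sigma+4\delta$, while $(\iota\phi(\sigma)\mid\sigma')_\sigma=\tfrac12\bigl(d_\sss(\sigma,\iota\phi(\sigma))+n-\ell\bigr)\le n+\tfrac12 C_0-\tfrac12\ell$ by the displayed estimate. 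Combining the two inequalities yields $\ell\le C_0+8\delta$, which bounds $d_\sss(\iota\phi(\sigma),\sigma')=\ell$, and hence $\diam(\Psi(\sigma)\cup\Pi(\sigma))$, in terms of $g$.

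It remains to prove the estimate. Fix $\sigma$ and a nearest point $\sigma'=\iota\beta\in\Pi(\sigma)$, and by Laudenbach's theorem \cite{L1} pass to embedded representatives. Realise $S$, $\sigma$ and $\sigma'$ simultaneously so that $\sigma$ and $\sigma'$ are in general position with respect to $S$; so that $\sigma'$ is in standard position, i.e.\ $\sigma'$ is the double of $\beta\times[0,1]$ and $\sigma'\cap S=\beta$; so that $|\sigma\cap\partial S|$ is minimal subject to this, so that the configuration computes a representative of $\Phi(\sigma)$; and so that $|\sigma\cap\sigma'|$ is minimal. Then $\sigma\cap S$ is a disjoint union of arcs and circles in $S$, at least one arc being essential. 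If some essential arc $\alpha$ of $\sigma\cap S$ is disjoint from $\beta$ in $S$, then $\alpha\times[0,1]$ is disjoint from $\beta\times[0,1]$, so the spheres $\iota\alpha$ and $\sigma'=\iota\beta$ are disjoint; taking $\phi(\sigma):=\alpha$ then gives $d_\sss(\sigma,\iota\phi(\sigma))\le d_\sss(\sigma,\sigma')+1=n+1$, so the estimate holds (with $C_0=1$; indeed in this case one can bypass the hyperbolic-geometry step and deduce $d_\sss(\iota\phi(\sigma),\sigma')\le 7$ directly from Theorem \ref{A1}). So the real task is to exclude the case in which every essential arc of $\sigma\cap S$ meets $\beta$. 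For this one argues by an outermost-disc surgery: an innermost circle $c$ of $\sigma\cap\sigma'$ on $\sigma'$ bounds a disc $D\subseteq\sigma'$ whose interior misses $\sigma$, and $D\cap S$ is a union of sub-arcs of $\beta$; using $D$ to push $\beta$ across $\sigma$ one produces an arc $\beta'$ which, by the minimality of $|\sigma\cap\sigma'|$, meets $\sigma$ in strictly fewer points, which by Lemma \ref{B4} together with the minimality and case hypotheses is essential and again in standard position, and which satisfies $d_\sss(\sigma,\iota\beta')<d_\sss(\sigma,\iota\beta)$ --- contradicting $\sigma'\in\Pi(\sigma)$.

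The main obstacle is this last step: converting the drop in the geometric intersection number $|\sigma\cap\sigma'|$ into a genuine drop of distance in $\sss$, and checking that the surgered arc $\beta'$ is essential and still in standard position (so that $\iota\beta'\in\iota\aaa$). This is precisely where the minimality conditions built into $\phi$, together with the result of \cite{MS} recorded in Lemma \ref{B4}, are needed. A secondary and more routine point is to verify that $S$, $\sigma$ and $\sigma'$ can be put simultaneously into all of the minimal positions required above.
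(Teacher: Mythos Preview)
Your reduction to the estimate $d_\sss(\sigma,\iota\phi(\sigma))\le d_\sss(\sigma,\iota\aaa)+C_0$ is sound, and the hyperbolic-geometry calculation that follows is correct.  The difficulty is the proof of the estimate itself, and the step you flag as ``the main obstacle'' is not merely an obstacle but a genuine gap.

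There is no mechanism that converts a drop in the geometric intersection number $|\sigma\cap\sigma'|$ into a \emph{strict} drop in $d_\sss(\sigma,\sigma')$.  A single innermost-disc surgery on $\sigma'$ in the direction of $\sigma$ produces a sphere $\tau$ adjacent to $\sigma'$ with $|C(\sigma,\tau)|<|C(\sigma,\sigma')|$, but adjacent vertices of $\sss$ can be at the same distance from $\sigma$, or farther; nothing forces $d_\sss(\sigma,\tau)<d_\sss(\sigma,\sigma')$.  Worse, there is no reason the surgered sphere should lie in $\iota\aaa$: an innermost disc $D\subseteq\sigma'$ need not respect the involution exchanging the two copies of $H$, so the result of surgery is typically not the double of any arc $\beta'\times[0,1]$.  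Your phrase ``using $D$ to push $\beta$ across $\sigma$'' does not describe a well-defined operation on the arc $\beta\subseteq S$, since $D$ is a disc in $\sigma'$ with boundary on $\sigma$, not a homotopy of $S$.  Finally, Lemma~\ref{B4} cannot do the work you assign it: it bounds arc-graph distance between arcs cut out on the two horizontal boundaries of $H$ by a multidisc; it says nothing about essentiality of a surgered arc, nothing about ``standard position'', and nothing about $d_\sss$.

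The paper takes a different route that avoids this trap.  Rather than surgering $\sigma'$ towards $\sigma$ and hoping for a distance drop, it fixes $\sigma'\in\Psi(\sigma)$ and surgers $\sigma$ towards $\sigma'$, showing (Proposition~\ref{C3}, proved via Lemmas~\ref{C4}--\ref{C8}) that the innermost discs can be chosen so that every intermediate sphere $\sigma_i$ has $\Psi(\sigma_i)$ within distance $16$ of $\Psi(\sigma)$.  Theorem~\ref{A2} then follows from the result of \cite{HilH} that surgery paths are uniform unparameterised quasigeodesics (Theorem~\ref{C2}): for $\tau\in\Pi(\sigma)$, quasiconvexity of $\iota\aaa$ places $\tau$ boundedly close to the geodesic from $\sigma$ to $\sigma'$, hence to some $\sigma_i$ on the surgery path, hence (since $\Psi$ is coarsely lipschitz and by Proposition~\ref{C3}) to $\Psi(\sigma)$.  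Your outline makes no use of Theorem~\ref{C2}, which the paper explicitly identifies as the key ingredient; without it, or some comparable control on how surgery interacts with distance in $\sss$, the argument does not close.
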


The key ingredient for the proof is the result of \cite{HilH} that
surgery paths in $ \sss $ are quasigeodesic
(stated as Theorem \ref{C2} here).

Of course, it retrospectively follows from Theorem \ref{A2} that
$ \phi $ is coarsely lipschitz, but we know of no argument to
show that $ \iota \aaa $ is quasiconvex without first constructing
such a retraction.

\vskip0.4cm

We thank Saul Schleimer for inspiring conversations, 
in particular explaining to us his work with Masur.
We thank Arnaud Hilion for suggesting that
surgery sequences may be useful in describing nearest-point
projection maps.
We also thank Sebastian Hensel and Koji Fujiwara for related discussions.
Some of this work was carried out while both authors were visiting the
Tokyo Institute of Technology.
We are grateful to that institution for its generous support, and to Sadayoshi
Kojima for his invitation.
The second author was supported by an EPSRC Doctoral Training Award
and is currently supported by Warwick Institute of Advanced Studies
and Institute of Advanced Teaching and Learning.

\section{The main construction}\label{SB}

In this section, we fix a non-empty subset, $ \Sigma \subseteq M $,
which is a disjoint union of pairwise non-homotopic essential embedded
2-spheres.

Let $ \gamma $ be a closed multicurve (a non-empty disjoint union
of closed curves) in $ M $.
Up to small homotopy, we can assume $ \gamma $ to be embedded,
and write $ \gamma \subseteq M $.
We will assume that $ \gamma $ is in general position with respect to
$ \Sigma $, so that $ |\gamma \cap \partial \Sigma| < \infty $.

\begin{defn}
We say that $ \gamma $ is \emph{efficient} with respect to $ \Sigma $
if and only if $ |\gamma \cap \Sigma| $ is minimal in the homotopy class
of $ \gamma $.
\end{defn}

It is easily seen that $ \gamma $ is efficient if and only if each component
of $ \gamma $ is efficient.
Also, if $ \gamma $ is efficient with respect to each component of
$ \Sigma $, then it is efficient with respect to $ \Sigma $.
In fact, we have a converse:

\begin{lem}\label{B1}
If $ \gamma $ is efficient with respect to $ \Sigma $, then it
is efficient with respect to each component of $ \Sigma $.
\end{lem}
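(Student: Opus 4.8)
The plan is to argue the contrapositive: if $\gamma$ fails to be efficient with respect to some component $\sigma$ of $\Sigma$, then we can reduce $|\gamma \cap \Sigma|$ by a homotopy, so $\gamma$ was not efficient with respect to $\Sigma$. The starting point is the characterization of inefficiency for a single sphere in terms of an innermost bigon (or half-disc). Concretely, if $\gamma$ is not efficient with respect to $\sigma$, then after a homotopy rel $\gamma \cap (\Sigma \setminus \sigma)$ one can arrange that $\sigma$ cuts off a disc $D \subseteq M$ whose boundary consists of an arc of $\gamma$ together with an arc of $\sigma$, with $\rint D \cap \gamma = \emptyset$ and $\rint D \cap \sigma = \emptyset$; pushing the arc of $\gamma$ across $D$ removes (at least) two intersection points of $\gamma$ with $\sigma$. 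This is essentially the standard innermost-disc argument for surfaces in $3$-manifolds, and it is the kind of statement that should already be available (it underlies the very notion of efficiency); I would cite it or spell it out briefly.

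The real issue is that the disc $D$ used to simplify $\gamma \cap \sigma$ may meet the \emph{other} components of $\Sigma$, so the homotopy pushing $\gamma$ across $D$ could create new intersections with $\Sigma \setminus \sigma$ and fail to decrease $|\gamma \cap \Sigma|$ overall. So the main step is to choose $D$ so that $\rint D \cap \Sigma = \emptyset$. First I would take $D$ to meet $\Sigma$ transversally; then $D \cap (\Sigma \setminus \sigma)$ is a disjoint union of arcs and circles in the interior of $D$. Since the components of $\Sigma$ are embedded spheres and the ambient manifold $M$ is the double of a handlebody (in particular, irreducible away from spheres — more precisely, every embedded sphere either bounds a ball or is essential, and two disjoint spheres here behave controllably), each circle of $D \cap \Sigma'$ bounds a sub-disc of $D$ and also bounds a disc on the corresponding sphere; swapping discs and pushing off removes the circle intersections without increasing $|\gamma \cap \Sigma|$. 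The arcs of $D \cap (\Sigma \setminus \sigma)$ have endpoints on $\partial D$; an endpoint cannot lie on the arc of $\partial D$ coming from $\sigma$ (that arc lies on $\sigma$, which is disjoint from the other components), so all endpoints lie on the arc of $\gamma$. An outermost such arc $\beta$ cuts off a sub-disc $D_0 \subseteq D$ bounded by $\beta$ and a sub-arc $\delta$ of $\gamma$, with $\rint D_0 \cap \Sigma = \emptyset$; but then $D_0$ itself is a disc witnessing that $\delta$ can be pushed across a component of $\Sigma$, and since $\rint D_0$ misses all of $\Sigma$, this push strictly decreases $|\gamma \cap \Sigma|$, contradicting efficiency of $\gamma$ with respect to $\Sigma$. (If there are no such arcs, $D$ already has interior disjoint from $\Sigma$ and we push across $D$ directly.)

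So the argument reduces to two routine simplifications (removing circle intersections of $D$ with $\Sigma$ by disc-swaps, using that the relevant spheres bound discs on each other in $M$, and handling arc intersections by passing to an outermost sub-disc) followed by a single application of the single-sphere efficiency criterion in the \emph{opposite} direction — inefficiency with respect to one sphere, via a cleaned-up disc, produces a global reduction. The one place to be careful is the disc-swap step for circles: one must check that no circle of $D \cap \sigma$ itself needs to be removed (there are none, since $\rint D \cap \sigma = \emptyset$ by choice of $D$), and that swapping discs on a component $\sigma' \neq \sigma$ does not move $\gamma$, which holds because such a swap is supported in a neighbourhood of a sub-disc of $\sigma'$ disjoint from $\gamma$ after a small push-off. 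I expect the disc-exchange bookkeeping to be the main obstacle to a fully clean writeup, but conceptually it is the standard innermost-disc technology.
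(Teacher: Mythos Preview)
Your approach is correct in outline but takes a considerably longer route than the paper. Both arguments run the contrapositive via an inefficient bigon, but the paper exploits the fact that $\gamma$ is only a \emph{curve}: once one has arcs $a\subseteq\sigma$ and $b\subseteq\gamma$ with $a\cup b$ null-homotopic in $M$, one simply homotopes $b$ to $a$ and then off $\sigma$. This drops $|\gamma\cap\sigma|$ by at least two, and since $a\subseteq\sigma$ is disjoint from every other component of $\Sigma$, the resulting curve $(\gamma\setminus b)\cup a$ meets $\Sigma\setminus\sigma$ in no more points than $\gamma$ did. No embedded disc $D$ is needed, and hence none of your circle-swap and outermost-arc cleanup of $D\cap(\Sigma\setminus\sigma)$ is needed either; that machinery is what one uses when homotoping a \emph{surface} in a $3$-manifold, whereas for a one-dimensional object the homotopy may pass freely through $\Sigma$ and only the final position matters. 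Your version would go through (modulo justifying the existence of an embedded bigon disc whose interior misses both $\gamma$ and $\sigma$, which in a $3$-manifold with nontrivial $\pi_2$ is not entirely automatic), but it is doing substantially more work than the lemma requires.
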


\begin{proof}
Let $\sigma$ denote any component of $\Sigma$.
If $ \gamma $ is not efficient with respect to $ \sigma $,
then there are arcs $ a \subseteq \sigma $ and $ b \subseteq \gamma $,
with the same endpoints such that $ a \cup b $ is null homotopic in $ M $.
By homotoping $ b $ to $ a $ and then off $ \sigma $,
we can reduce by at least two the number of intersections with $\sigma$.
Since the arc $b$ is disjoint from any other component of $\Sigma$,
then the homotopy does not increase the number of intersections
between the curve and the other components of $\Sigma$.
Therefore $|\gamma \cap \Sigma|$ is not minimal over the homotopy class of $\gamma$.
\end{proof}

For future reference, we will refer to a curve $ a \cup b $ as in the
above proof as an \emph{inefficient bigon}.

Now, let $ S $ be a surface which admits a map,
$ f : S \longrightarrow M $, which induces an isomorphism of fundamental groups.
For the rest of this paper, we will fix a homotopy class of such
maps, and assume that all such maps belong to this class.
We will assume $ f $ to be in general position with respect
to $\Sigma$ and $ f|\partial S$ to be an embedding.

\begin{defn}
We say that $ f : S \longrightarrow M $ is efficient with respect
to $ \Sigma$ if $ f(\partial S)$ is efficient.
\end{defn}

Note that any homotopy of $\partial S$ extends to a homotopy of $S$,
and so this is equivalent to asserting that the number of arcs in
$ f^{-1} (\Sigma) $ is minimal (in the homotopy class of $f$).
Note also that the earlier discussion carries over to efficient maps
of $ S $, i.e.\ $f$ is efficient with respect to $\Sigma$ if and only
if it is efficient with respect to each component of $\Sigma$.
Note that, $ f^{-1}(\Sigma) \cap \partial S \ne \varnothing $.
Otherwise $ f(S) $ could
be homotoped into $ M \bksl \Sigma $, and so could not carry the whole
of $ \pi_1(M) $.

\begin{lem}\label{B2}
If $f$ is efficient, then every arc of $ f^{-1}(\Sigma)$ is essential.
\end{lem}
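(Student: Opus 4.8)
The plan is to argue by contradiction: suppose $f$ is efficient with respect to $\Sigma$, but some arc $\alpha$ of $f^{-1}(\Sigma)$ is trivial in $S$, meaning $\alpha$ cuts off a disc $D \subseteq S$ with $\partial D = \alpha \cup \beta$, where $\beta \subseteq \partial S$ is an arc. Among all trivial arcs of $f^{-1}(\Sigma)$, choose one for which the cut-off disc $D$ is innermost, i.e.\ contains no other component of $f^{-1}(\Sigma)$ in its interior. (Such an innermost choice exists since $f^{-1}(\Sigma)$ is a finite union of arcs and closed curves; if $D$ contained a component of $f^{-1}(\Sigma)$, that component, being properly embedded in the disc $D$ and meeting $\partial S$ only along $\beta$, would itself be a trivial arc cutting off a smaller disc, or a closed curve bounding a sub-disc, contradicting minimality of $D$ — in the latter case one can first remove innermost closed-curve components by a separate standard argument, or simply absorb them into the same innermost selection.)

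The key step is then to use this innermost disc $D$ to homotope $f$ and strictly reduce $|f(\partial S) \cap \Sigma|$, contradicting efficiency. Since $\alpha \subseteq f^{-1}(\sigma)$ for some component $\sigma$ of $\Sigma$, the image $f(\alpha)$ is an arc in $\sigma$; because $\sigma$ is a 2-sphere, $f(\alpha)$ together with any arc of $\sigma \setminus f(\alpha)$ with the same endpoints bounds a disc in $\sigma$. The restriction $f|_D : D \to M$ is a map of a disc whose boundary arc $\alpha$ maps into $\sigma$ and whose boundary arc $\beta$ maps into $\partial S$ (hence into $M$). I would push $f|_D$ across $\sigma$: using the disc in $\sigma$ bounded by $f(\alpha)$, homotope $f$ rel a neighbourhood of $S \setminus D$ so that $f(D)$ is moved to the other side of $\sigma$, thereby removing the intersection arc $\alpha$, and with it the two endpoints $\partial\alpha \subseteq f(\partial S) \cap \partial\Sigma$. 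By the innermost choice, no other components of $f^{-1}(\Sigma)$ lie in $D$, so the homotopy does not create new intersections; it strictly decreases $|f^{-1}(\Sigma) \cap \partial S|$. This contradicts the efficiency (minimality) of $f$, so every arc of $f^{-1}(\Sigma)$ must be essential.

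The main obstacle is making the homotopy-across-$\sigma$ step clean: one must check that the push can be realised as an honest homotopy of the map $f$ (not merely an isotopy of an embedded surface), keeping $f|_{\partial S}$ an embedding in general position with $\Sigma$, and that it genuinely removes the endpoints of $\alpha$ on $\partial S$ without introducing new intersection points of $f(\partial S)$ with $\partial\Sigma$. This is essentially the same bigon-removal mechanism already used in the proof of Lemma~\ref{B1} (an inefficient bigon $f(\alpha) \cup f(\beta)$, with $f(\beta) \subseteq \partial S$ parametrising the boundary and $f(\alpha) \subseteq \sigma$), so I would phrase it to reuse that argument: the pair $(f(\alpha), f(\beta))$ furnishes an inefficient bigon for the multicurve $f(\partial S)$ with respect to $\sigma$, and the conclusion of (the proof of) Lemma~\ref{B1} — extended from curves to the curve $f(\partial S)$ — directly contradicts efficiency. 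The disc $D$ in $S$ is what guarantees that $f(\beta)$ really is an arc of $f(\partial S)$ with the same endpoints as $f(\alpha)$ and that the bigon can be filled, so the geometric input is precisely the existence of the innermost cut-off disc.
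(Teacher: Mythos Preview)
Your proposal is correct and follows essentially the same approach as the paper: an inessential arc cuts off a disc in $S$, and the image of that disc under $f$ witnesses an inefficient bigon for $f(\partial S)$, contradicting efficiency via Lemma~\ref{B1}. The paper's proof is just your final paragraph; your innermost-disc selection and explicit homotopy construction are unnecessary, since the disc $D\subseteq S$ (regardless of what other components of $f^{-1}(\Sigma)$ it may contain) already maps under $f$ to a null-homotopy of $f(\alpha)\cup f(\beta)$ in $M$, which is all that the definition of an inefficient bigon requires.
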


\begin{proof}
Suppose by contradiction that $ f^{-1}(\Sigma) $ contains an inessential
arc $b$.
Then there is a subsegment $a$ of $\partial S$ so that $a \cup b$ bounds
a disc in $S$.
That is, $ a \cup b $ is an inefficient bigon, so as in
Lemma \ref{B1}, we can deduce that $f(\partial S)$ is not efficient with
respect to $\Sigma$.
\end{proof}

Since $ f $ is $ \pi_1 $-injective, each simple closed curve component
of $ f^{-1}(\Sigma) $ bounds a disc in $ S $.
One could remove such curves, by a simple surgery on $ S $.
However, since they play essentially no role in our arguments,
we will leave them alone.
(For most purposes, in particular in throughout this section, we can
effectively ignore them).

It is also worth noting that, if two maps $ f,f' : S \longrightarrow M $
induce the same map on fundamental groups (up to conjugacy), then
they are homotopic (since the higher homotopy groups of $ S $ are
all trivial).
However, if it happens that $ f|\partial S = f'|\partial S $, it
is not necessarily the case that one can take the homotopy to
fix $ \partial S $.
(One may need to push $ f(\partial S) $ around an essential sphere in $ M $
before getting back to the original curve.)
Again, this does not matter to us.

The main result is the following:

\begin{thm}\label{B3}
Suppose that $ f,f' : S \longrightarrow M $ are efficient maps
(in the same homotopy class).
Let $ \alpha \subseteq f^{-1} (\Sigma) $ and $ \alpha' \subseteq (f')^{-1} (\Sigma)$
be arcs.
Then $ d_{\aaa}(\alpha,\alpha') \le 7 $.
\end{thm}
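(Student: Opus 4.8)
The plan is to reduce the general case to a comparison between two efficient maps that happen to agree on the boundary, where surgery-type arguments are most tractable. First I would handle the special case where $f = f'$: here $\alpha$ and $\alpha'$ are two essential arc components of the \emph{same} surface $f^{-1}(\Sigma) \subseteq S$. By Lemma~\ref{B1} both arcs are efficient with respect to a single component $\sigma$ of $\Sigma$ (the one they each map into), so they are disjoint essential arcs in $S$ that are both efficient with respect to subsurfaces cut out by $\Sigma$. In fact, any two components of $f^{-1}(\Sigma)$ are either disjoint or coincide, so $\alpha$ and $\alpha'$ are disjoint and hence $d_\aaa(\alpha,\alpha') \le 1$. (If they lie in different components of $f^{-1}(\Sigma)$ associated to different spheres, the same disjointness holds.) This disposes of the ``internal'' ambiguity cheaply.

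Next I would reduce the two-map case to the one-map case by interpolating. Since $f$ and $f'$ are homotopic, choose a homotopy $f_t$ from $f$ to $f'$. The preimages $f_t^{-1}(\Sigma)$ vary, and at generic times $f_t$ fails to be efficient; the idea is to subdivide the homotopy into finitely many elementary moves, each changing $f^{-1}(\Sigma)$ by an isotopy of $S$ or by a single finger-move / bigon-removal across $\Sigma$, and to track how a chosen essential arc can be carried along. The key input here is Lemma~\ref{B4} (the result from \cite{MS}, which I expect says that if two surgeries or bigon moves produce arc systems, a bounded-distance comparison holds between surviving essential arcs). Using this, after each elementary move the chosen arc either persists or is replaced by one within bounded $\aaa$-distance; summing the constants over the (a priori uncontrolled) number of moves is the danger, so the argument must instead be \emph{global}: one shows that the arc obtained from \emph{any} efficient map lies within a fixed distance of a canonical reference, by comparing directly against an auxiliary efficient position rather than chaining moves.

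Concretely, I would argue as follows. Pick a homotopy from $f$ to $f'$ and an efficient representative $f''$ that agrees with $f$ on $\partial S$ after the homotopy — more precisely, isotope $S$ in $M$ (using Laudenbach \cite{L1} to fix $\Sigma$ instead) to bring $f(\partial S)$ and $f'(\partial S)$ into a common efficient position; since efficient positions of a fixed multicurve with respect to $\Sigma$ differ by isotopy (standard bigon/innermost-disc surgery, as in the proof of Lemma~\ref{B1}), the arc systems $f^{-1}(\Sigma)$ and $(f')^{-1}(\Sigma)$ become identified up to a bounded error controlled by Lemma~\ref{B4}. The passage from $\alpha$ to an arc in this common system costs at most a constant by the one-map case above; doing this at both ends and invoking Lemma~\ref{B4} for the intermediate comparison yields a bound of the form $d_\aaa(\alpha,\alpha') \le c$, and bookkeeping the constants from \cite{MS} is what pins down the explicit $7$.

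The main obstacle is precisely the bookkeeping and the subtlety that a homotopy between $f$ and $f'$ need not fix $\partial S$ even when the boundary curves coincide (the ``push around an essential sphere'' phenomenon noted before the theorem). So one cannot naively assume $f|\partial S = f'|\partial S$ and must instead argue at the level of curves in $M$ and their efficient positions relative to $\Sigma$, importing the distance estimate from \cite{MS} (Lemma~\ref{B4}) as the one nontrivial quantitative ingredient. I expect the clean route is: (i) reduce to $\partial S$ being efficient on both sides; (ii) show any two efficient positions of a multicurve with respect to $\Sigma$ are related by an isotopy together with a bounded number of bigon moves, each governed by Lemma~\ref{B4}; (iii) conclude. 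Everything else is surgery combinatorics of the type already appearing in Lemmas~\ref{B1} and~\ref{B2}.
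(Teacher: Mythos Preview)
Your proposal has a genuine gap: you have misidentified what Lemma~\ref{B4} actually says and, as a result, missed the one-step application that makes the proof work.  Lemma~\ref{B4} is not a statement about bigon moves or surgeries on arc systems in $S$.  It is a statement about a properly embedded multidisc $\Delta$ in the handlebody $H = S \times [0,1]$: if $\partial \Delta$ meets $\partial S \times \{0,1\}$ minimally, then any arc of $\Delta \cap (S \times \{0\})$ is within distance $7$ of any arc of $\Delta \cap (S \times \{1\})$.  There is no iteration, no chaining of moves, and no summing of constants.

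The paper's argument is to take the homotopy itself as a map $F : S \times [0,1] \longrightarrow M$ with $F|S_0 = f$ and $F|S_1 = f'$.  One puts $F$ in general position with respect to $\Sigma$ and considers the multicurve $\delta = F^{-1}(\Sigma) \cap \partial H$.  Efficiency of $f$ and $f'$ forces $\delta$ to meet $\partial S \times \{0,1\}$ minimally (otherwise one finds an inefficient bigon as in Lemma~\ref{B1}).  Since $F$ induces a $\pi_1$-isomorphism and $F(\delta) \subseteq \Sigma$, every component of $\delta$ is null-homotopic in $H$, hence bounds an embedded disc by Dehn's Lemma.  This produces exactly the multidisc $\Delta$ required by Lemma~\ref{B4}, and the bound $7$ follows in one shot.

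Your step (ii), asserting that any two efficient positions of a multicurve differ by a \emph{bounded} number of bigon moves, is not established and is in general false without further input; and your interpolation-by-elementary-moves idea, as you yourself note, gives no uniform bound.  The ``global'' fix you sketch in the third paragraph never produces the multidisc that Lemma~\ref{B4} needs, so it cannot be completed as written.  The missing idea is precisely to regard the homotopy as a map from the product handlebody and to invoke Dehn's Lemma.
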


The proof of Theorem \ref{B3} uses a result of Masur and Schleimer.
We refer to \cite{MS} (Lemma 12.20)
for the most general
statement and for a proof.
We state below the subcase we need.
Before stating the result, we recall that
a \emph{multidisc} is a disjoint union of embedded discs,
and two multicurves on a surface are said to \emph{intersect minimally}
if they realise the minimal number of intersections over their homotopy class.

\begin{lem}\label{B4}
(Masur, Schleimer)
Let $S$ be a surface with boundary and denote by $H$ the handlebody $S \times [0,1]$.
Let $\Delta$ be a properly embedded multidisc in $H$ and suppose that $\partial \Delta$ intersects $\partial (S \times \{0\})$ and
$\partial (S \times \{1\})$ minimally.
Let $\alpha$ be an arc in $\Delta \cap (S \times \{0\})$ and $\alpha'$ be an
arc in $\Delta \cap (S \times \{1\})$.
Then $ d_{\aaa}(\alpha,\alpha') \le 7 $.
\end{lem}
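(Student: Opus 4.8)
The plan is to follow the sweep-out argument of Masur and Schleimer (\cite{MS}, Lemma 12.20), specialised to the trivial interval bundle $H=S\times[0,1]$.

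I would begin with two easy reductions. Since $\Delta$ is properly embedded, $\Delta\cap\partial H=\partial\Delta$, and $S\times\{0\}$ and $S\times\{1\}$ both lie in $\partial H$; hence $\alpha$ and $\alpha'$ are themselves arcs of the multicurve $\partial\Delta$. Two arcs of $\partial\Delta$ lying in $S\times\{0\}$ are disjoint embedded subarcs of $\partial\Delta$, so they are disjoint in $S$, and any two essential such arcs lie at distance at most $1$ in $\aaa$; the same holds for arcs in $S\times\{1\}$. Thus it is enough to bound $d_\aaa(\alpha,\alpha')$ when $\alpha\subseteq S\times\{0\}$ and $\alpha'\subseteq S\times\{1\}$.

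Next I would set up the sweep-out. Let $\pi\colon S\times[0,1]\to[0,1]$ be the projection. After a small isotopy, take $\Delta$ to be a vertical product collar near each of $S\times\{0\}$ and $S\times\{1\}$, and elsewhere take $\pi|_\Delta$ to be Morse (allowing boundary critical points) with distinct critical values. For a regular value $t\in(0,1)$, the slice $L_t=\Delta\cap(S\times\{t\})$ is a properly embedded $1$-manifold in $S\times\{t\}\cong S$. Every closed component of $L_t$ bounds a disc in $\Delta$, hence bounds a disc in $H$, hence is trivial in $S$; so the essential components of $L_t$ are pairwise disjoint essential arcs, and span a simplex $A_t$ of $\aaa$ (possibly empty). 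For $t$ just above $0$, $A_t$ contains an arc isotopic to $\alpha$, and for $t$ just below $1$ it contains an arc isotopic to $\alpha'$. As $t$ increases, $A_t$ is locally constant off the critical values of $\pi|_\Delta$, and at each critical value it changes by an elementary move: the birth or death of a trivial circle (which does not affect $A_t$), or a band move---possibly along $\partial S$---affecting at most two components of $L_t$.

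The hard part is the remaining step: bounding the total change of $A_t$ as $t$ runs from $0$ to $1$ \emph{independently of the complexity of $\Delta$}. Merely counting band moves will not do, since there may be arbitrarily many of them, and $A_t$ may in fact be empty for some intermediate $t$, so one cannot just track a single arc continuously. The Masur--Schleimer argument instead proceeds by induction on a complexity of $\Delta$, such as $|\partial\Delta\cap(\partial S\times\{1/2\})|$: one locates an outermost slice arc---an arc of some $L_t$ cobounding a subdisc $E\subseteq\Delta$ whose intersection with $S\times\{t\}$ is exactly that arc---cuts $\Delta$ along it, and uses the fact that $\partial E$ projects to a null-homotopic loop in $S$ to see that the isotopy class in $S$ of the outermost arc is forced by the bottom and top arcs of $\partial\Delta$ lying on $\partial E$; iterating relates $\alpha$ to $\alpha'$ through a chain of arcs of uniformly bounded length, giving $d_\aaa(\alpha,\alpha')\le 7$. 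Together with the distance-$1$ estimates above, this yields $\diam_\aaa\bigl((\Delta\cap(S\times\{0\}))\cup(\Delta\cap(S\times\{1\}))\bigr)\le 7$. I expect obtaining this complexity-independent bound to be the genuine obstacle; everything before it is formal.
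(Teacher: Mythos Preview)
The paper does not actually prove this lemma: it quotes it as a special case of Lemma~12.20 of \cite{MS}, and the only work done in the paper is to check that the hypotheses of that lemma are satisfied --- namely, that $S\times\{0\}$ and $S\times\{1\}$ are the horizontal boundary components of the trivial $I$-bundle $H$, and are ``large incompressible holes'' in the sense of \cite{MS}. So your proposal is not really comparable to the paper's proof, because there is no proof in the paper to compare against.

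What you have written is an honest outline of the Masur--Schleimer argument itself, and as such it goes well beyond what the paper attempts. Your reductions and the sweep-out setup are correct: the slices $L_t$ are properly embedded $1$-manifolds, closed components are inessential because $\pi_1(S)\cong\pi_1(H)$, and the essential arcs span simplices of $\aaa$. You are also right to flag exactly where the content lies: the complexity-independent bound on the diameter of $\bigcup_t A_t$ is the entire substance of the lemma, and your proposal does not supply it --- you describe the shape of the induction (outermost subdiscs of $\Delta$, cutting along them, using that the boundary of such a disc is null-homotopic in $S$) but stop short of carrying it out or explaining why the chain has length at most $7$. So as a self-contained proof your proposal is incomplete at precisely the step you identify; as a reduction to \cite{MS} together with a verification of hypotheses, it already matches what the paper does.
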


To relate this to \cite{MS}, note that, in the terminology of that paper,
the surfaces $ S_0 = S \times \{ 0 \} $ and $ S_1 = S \times \{ 1 \} $
are the ``horizontal'' boundary components of $ H $ viewed as a trivial
``$ I $-bundle'', and are ``large incompressible holes''
(see Definitions 12.14 and 5.2 thereof).
Therefore the hypotheses of their Lemma 12.20 are fulfilled.
(Here, of course, we are identifying $ S_0 $ and $ S_1 $
with $ S $.)

\begin{proof}[Proof of Theorem \ref{B3}]
By hypothesis $f$ and $f'$ are homotopic.
Thus there exists a map $F: S\times [0,1] \longrightarrow M$ so that $F|S_0 =f$
and $F|S_1 =f'$.
Denote $S\times [0,1]$ by $H$.
Note that, since $f$ and $f'$ induce isomorphisms of fundamental groups,
so does $F$.

We can suppose that $F$ is in general position with respect to $\Sigma$
so that $F^{-1}(\Sigma) \cap \partial H$ is an embedded multicurve $\delta$ in $\partial H$. Note that 
 $f^{-1}(\Sigma)$ coincides with $\delta \cap S_0$ and
$(f')^{-1}(\Sigma)$ coincides with $\delta \cap S_1$.

Now $ \delta $ intersects $ \partial S \times \{ 0,1 \} $ minimally.
For if not, we could find arcs $ a \subseteq \partial S \times \{ 0,1 \} $,
and $ b \subseteq \delta $, so that $ a \cup b $ bounds a disc in
$ \partial H $.
Mapping to $ M $ via $ F $ gives us an inefficient bigon, for
one of $ f(\partial S) $ or $ f'(\partial S) $ (cf.\ Lemma \ref{B1}).
But since both these maps were assumed efficient, this gives a contradiction.

Since $ f(\delta) \subseteq \Sigma $ and $F$ induces an isomorphism
of fundamental groups, each component of $\delta$ is null homotopic in $H$.
By Dehn's Lemma each component of $\delta$ bounds an embedded disc in $H$.

Now, if $\alpha$ and $\alpha'$ are as in the hypothesis, then we can apply
Lemma \ref{B4} and conclude that $d_{\aaa}(\alpha,\alpha') \le 7$.
\end{proof}

Let $ \Phi(\Sigma) \subseteq \aaa $ be the set of arcs contained in
$ f^{-1}(\Sigma) $ for some efficient map, $ f $.
(That is, we include all such arcs for all such maps, $ f $, in the
given homotopy class.).
This is non-empty, and by Lemma \ref{B4}, $ \diam \Phi(\Sigma) \le 7 $.
Write $ \Phi(\sigma) = \Phi(\{ \sigma \}) $.
Clearly, if $ \sigma \in \Sigma $,
then $ \Phi(\sigma) \subseteq \Phi(\Sigma) $.
In particular, if $ \sigma, \sigma' $ are disjoint, then
$ \diam(\Phi(\sigma) \cup \Phi(\sigma')) \le 7 $.
Note that by Laudenbach's Theorem \cite{L1}, $ \Phi(\sigma) $ depends
only on the homotopy class of $ \sigma $ (as discussed in Section \ref{SA}).
Thus, we can view $ \Phi $ as associating to an element of $ V(\sss) $,
a subset of $ V(\aaa) $ of uniformly bounded diameter.

Furthermore, note that if $ f^{-1} (\sigma) $ consists of a single arc
(and possibly some simple closed curves) for some efficient map $ f $,
then the same must be true for any other efficient map $ f' $.
Indeed, arguing as in the proof of Theorem \ref{B2}, we see
that $ \delta \subseteq \partial H $ consists of a single curve meeting
both $ S \times \{ 0 \} $ and $ S \times \{ 1 \} $ in a single arc, and possibly some inessential curves disjoint from $\partial S \times \{0, 1\}$.
Since $ \delta $ is homotopically trivial in $ H $, it is easily seen that
the two arcs in $\delta \cap (S \times \{0, 1\})$ must represent the same element of $ \aaa $.
In other words, we have $ |\Phi(\sigma)| = 1 $, in this case.

Applying this to the case where $ \sigma = \iota(\alpha) $, as
in the introduction, we immediately get:

\begin{lem}\label{B5}
If $ \alpha \in \aaa $, then $ \Phi(\iota(\alpha)) = \{ \alpha \} $.
\end{lem}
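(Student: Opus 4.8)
The plan is to show that the obvious efficient map realizing $\alpha$ — namely, the embedding $S \cong S \times \{0\} \hookrightarrow H \subseteq M$ — is efficient with respect to $\sigma = \iota(\alpha)$, and that under this map $f^{-1}(\sigma)$ has exactly one essential arc component, namely $\alpha$ itself. That gives $\alpha \in \Phi(\iota(\alpha))$. Then I would invoke the paragraph immediately preceding the statement: since some efficient map pulls $\sigma$ back to a single arc (plus closed curves), the same holds for every efficient map, and moreover all the resulting arcs represent the same element of $\aaa$; hence $|\Phi(\iota(\alpha))| = 1$, which forces $\Phi(\iota(\alpha)) = \{\alpha\}$.

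The steps, in order, would be: First, recall the construction of $\iota(\alpha)$: take the disc $D = \alpha \times [0,1] \subseteq H = S \times [0,1]$ and double along $\partial H$ to get the sphere $\sigma \subseteq M$. Second, consider $f : S \cong S \times \{0\} \hookrightarrow M$. Then $f(\partial S) = \partial S \times \{0\}$ meets $\sigma$ exactly in $\partial \alpha \times \{0\}$, which is $\partial \alpha = |\alpha \cap \partial S|$ points; I would check this is minimal in the homotopy class of $f(\partial S)$, using that $\alpha$ is essential so no sub-bigon can be removed — an inefficient bigon between $f(\partial S)$ and $\sigma$ would, after pushing, reduce $|\alpha \cap \partial S|$ below its (already realized, since $\alpha$ is a simple essential arc) minimum, a contradiction. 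So $f$ is efficient with respect to $\sigma$. Third, identify $f^{-1}(\sigma)$: since $S \times \{0\}$ meets the doubled sphere $\sigma$ precisely along the arc $\alpha \times \{0\}$, the preimage $f^{-1}(\sigma)$ is the single essential arc $\alpha$ (with no closed-curve components at all, in this concrete model). Fourth, conclude $\alpha \in \Phi(\sigma)$ and apply the preceding discussion to get $\Phi(\iota(\alpha)) = \{\alpha\}$.

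The main obstacle — though a mild one — is Step 2: verifying that this particular $f$ is efficient, i.e.\ that $\partial \alpha \times \{0\}$ is a minimal-position intersection of $f(\partial S)$ with $\partial \sigma$. One has to be slightly careful because $\partial\sigma$ here is a pair of curves in $\partial H$ (the two copies of $\partial D \cap \partial H = \partial\alpha \times [0,1]$ boundaries), and an inefficient bigon would have to be argued away using the essentiality of $\alpha$ in $S$ together with the fact that a simple arc in a surface already realizes its geometric minimum. Once $f$ is known to be efficient with $f^{-1}(\sigma)$ a single arc, the rest is immediate from the text preceding the statement, so no further work is needed.
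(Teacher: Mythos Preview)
Your approach is essentially the paper's own: the paper simply says ``Applying this to the case where $\sigma = \iota(\alpha)$, as in the introduction, we immediately get'' Lemma~\ref{B5}, and you are spelling out what ``immediately'' means --- namely, that the standard embedding $f:S\cong S\times\{0\}\hookrightarrow M$ is efficient with $f^{-1}(\sigma)$ a single arc equal to $\alpha$, whereupon the preceding paragraph forces $|\Phi(\sigma)|=1$.

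One small simplification for your Step~2: you flag efficiency of $f$ as the main obstacle and argue via inefficient bigons, but there is a shorter route already in the paper. Since $\sigma$ is essential and $f$ is $\pi_1$-surjective, the paper notes (just after the definition of efficient maps) that $f^{-1}(\Sigma)\cap\partial S\ne\varnothing$ for \emph{any} map in the homotopy class; hence every map has at least one arc, i.e.\ $|f(\partial S)\cap\sigma|\ge 2$. Your explicit $f$ achieves exactly $2$, so it is efficient without further bigon analysis. With that in hand, Steps~3 and~4 are exactly as you say.
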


So far, everything has been canonical.
If we choose some $ \phi(\sigma) \in \Phi(\sigma) $, we get a map
$ \phi : V(\sss) \longrightarrow V(\aaa) $.
By Lemma \ref{B5}, $ \phi \circ \iota $ is the identity on $ V(\aaa) $.
By the above discussion, this extends to a 7-lipschitz retraction,
$ \phi : \sss \longrightarrow \aaa $.

In fact, we can show that this retraction is uniformly coarsely
lipschitz with multiplicative constant 1, stated as
Theorem \ref{A1} here.

To prove this, we first observe:

\begin{lem}\label{B6}
If $f:S \longrightarrow M$ is efficient with respect to a sphere $\sigma_1$,
and $\sigma_2$ is an embedded sphere disjoint from $\sigma_1$, then there
is a map $f'$ homotopic to $f$,
efficient with respect to $\sigma_2$, and coinciding with $f$ on the
preimage of $ \sigma_1 $.
\end{lem}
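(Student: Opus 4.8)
The plan is to modify $f$ only in a neighbourhood of $f^{-1}(\sigma_2)$, leaving it untouched near $f^{-1}(\sigma_1)$, so that efficiency with respect to $\sigma_1$ is preserved automatically. Start with $f$ as given, in general position with respect to $\sigma_1 \cup \sigma_2$. Since $\sigma_1$ and $\sigma_2$ are disjoint embedded spheres, we may choose product neighbourhoods $\sigma_1 \times (-1,1)$ and $\sigma_2 \times (-1,1)$ in $M$ that are themselves disjoint. If $f$ fails to be efficient with respect to $\sigma_2$, then by (the argument of) Lemma \ref{B1} applied to $\sigma_2$ there is an inefficient bigon: an arc $b \subseteq f^{-1}(\sigma_2) \cap \partial S$ (or more precisely, $f(b)$ lies in $\sigma_2$) and an arc $a$ of $\partial S$ with the same endpoints such that $f(a \cup b)$ is null-homotopic in $M$, and the null-homotopy lets us push $f(a)$ across $\sigma_2$ to reduce $|f(\partial S) \cap \sigma_2|$ by at least two.

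Next I would carry out this homotopy carefully so that it is supported in the component of $M \setminus \sigma_1$ containing the relevant piece of $f(a)$. The key point is that the bigon-reducing homotopy can be taken to move $f$ only inside an arbitrarily small regular neighbourhood of $f(a) \cup \sigma_2$ in $M$; since $f(a)$ and $\sigma_2$ are disjoint from $\sigma_1$, this neighbourhood can be chosen disjoint from $\sigma_1$, and in particular the homotopy fixes $f^{-1}(\sigma_1)$ pointwise (and does not change $f$ on a neighbourhood of it). Because $|f(\partial S) \cap \sigma_1|$ is unchanged and was already minimal, $f$ remains efficient with respect to $\sigma_1$. Each such move strictly decreases $|f(\partial S) \cap \sigma_2|$, so after finitely many moves we arrive at a map $f'$, homotopic to $f$, which is efficient with respect to $\sigma_2$ and agrees with the original $f$ on the preimage of $\sigma_1$.

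The main obstacle is the bookkeeping in the previous paragraph: one must check that successive bigon removals for $\sigma_2$ can always be localised away from $\sigma_1$, even after earlier moves have altered $f$ near $\partial S$. This is where disjointness of $\sigma_1$ and $\sigma_2$, together with the fact that the inefficient-bigon homotopy only needs a neighbourhood of the arc $f(a)$ and of $\sigma_2$, does the work: at every stage the arc $f(a)$ realizing the bigon lies in $f^{-1}(\sigma_2)$-adjacent territory, hence disjoint from $\sigma_1$, so the support of each homotopy stays in $M \setminus \sigma_1$. One should also note a minor subtlety: after the homotopies, $f'|\partial S$ is again an embedding and $f'$ is again in general position with respect to $\Sigma$, which can be arranged by a final small perturbation supported away from $\sigma_1$. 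With these points in place the lemma follows.
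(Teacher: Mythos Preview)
Your overall strategy---eliminate inefficient bigons for $\sigma_2$ by homotopies supported away from $\sigma_1$---is exactly the paper's approach. However, the crucial step is not justified correctly. You assert that the boundary arc $f(a)$ of an inefficient bigon for $\sigma_2$ is disjoint from $\sigma_1$, offering only that it ``lies in $f^{-1}(\sigma_2)$-adjacent territory, hence disjoint from $\sigma_1$''. This is not an argument: the arc $f(a)\subseteq f(\partial S)$ has its endpoints on $\sigma_2$, but nothing about the disjointness of $\sigma_1$ and $\sigma_2$ by itself prevents $f(a)$ from crossing $\sigma_1$ in its interior. (Also, your description of the bigon is garbled: $f^{-1}(\sigma_2)\cap\partial S$ is a finite set of points, not an arc; the bigon consists of an arc $b\subseteq f(\partial S)$ and an arc $a\subseteq\sigma_2$ with common endpoints and $a\cup b$ nullhomotopic.)

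The missing ingredient is precisely the hypothesis that $f$ is efficient with respect to $\sigma_1$, which is what the paper invokes. Concretely: lift the nullhomotopic loop $a\cup b$ to the universal cover $\tilde M$, so that $\tilde b\subseteq\tilde\gamma$ has both endpoints on a single lift $\tilde\sigma_2$. If $\tilde b$ met some lift $\tilde\sigma_1$, then since $\tilde\sigma_1$ separates $\tilde M$ and $\tilde\sigma_2$ lies on one side, $\tilde b$ would have to cross $\tilde\sigma_1$ at least twice; the sub-arc of $\tilde b$ between two such crossings, together with an arc in $\tilde\sigma_1$, projects to an inefficient bigon for $\sigma_1$, contradicting efficiency. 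Thus $b$ really is disjoint from $\sigma_1$, and your localisation of the homotopy goes through. Once you supply this argument (or simply cite efficiency with respect to $\sigma_1$ as the reason, as the paper does), the proof is complete.
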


\begin{proof}
As in the proof of Lemma \ref{B1}, we can move $ f $ into efficient
position with respect to $ \sigma_2 $ by eliminating inefficient bigons.
But since $ f $ is already efficient with respect to $ \sigma_1 $, none
of these bigons can meet $ \sigma_1 $.
Therefore the homotopy can be carried out on the complement of
$ \sigma_1 $.
\end{proof}

\begin{proof}[Proof of Theorem \ref{B6}]
Consider two spheres $\sigma$ and $\sigma'$ in $M$, and
let $\alpha= \phi(\sigma)$ and $\alpha'= \phi(\sigma')$.
Let $ \sigma=\sigma_0, \sigma_1 ,\ldots, \sigma_n =\sigma' $
be a geodesic in $\sss$.
From this, we will construct a path
$ \alpha = \alpha_0, \alpha_1 ,\ldots, \alpha_{n-1} $ in $ \aaa $,
with $d_{\aaa}(\alpha_{n-1}, \alpha') \le 7$.

By the definition of $\phi$ there if an efficient map
$f_0 :S \rightarrow M$ so that $\alpha_0 = \alpha$
is contained in $f_0 ^{-1}(\sigma)$.
Now, by Lemma \ref{B6} there is a map $f_1$ homotopic to $f_0$,
efficient with respect to $\sigma_1$, coinciding with $f_0$ on
the preimage of $\sigma$.
The map $f_1$ yields an arc $\alpha_1$ in $f_1 ^{-1} (\sigma_1)$,
disjoint from $\alpha_0$.
We can now continue inductively, applying Lemma \ref{B6}
to each pair of spheres $(\sigma_i, \sigma_{i+1})$ in turn and
obtain our path $ \alpha_0, \alpha_1 ,\ldots, \alpha_{n-1} $ in $\aaa$.
Now note that $\alpha_{n-1}$ is by construction an arc in $f_{n-1}^{-1}
(\sigma_{n-1})$, where $f_{n-1}$ is a map efficient with respect to $\sigma_{n-1}$.
Thus, $ \alpha_{n-1}, \alpha' \in \Phi(\sigma_{n-1} \cup \sigma') $.
Therefore, by Theorem \ref{B3}, we have $d_{\aaa}(\alpha_{n-1}, \alpha') \le 7$.
It follows that $ d_{\aaa}(\alpha, \alpha') \le (n-1)+7 = n+6 $
as required.
\end{proof}

In fact, the argument also gives another proof of the result of \cite{HamH}:

\begin{thm}\label{B7}
(Hamenst\"adt, Hensel)
$\iota: \aaa \longrightarrow \sss$ is an isometric embedding,
\end{thm}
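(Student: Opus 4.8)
The plan is to prove the two inequalities $d_\sss(\iota(\alpha),\iota(\alpha')) \le d_\aaa(\alpha,\alpha')$ and $d_\aaa(\alpha,\alpha') \le d_\sss(\iota(\alpha),\iota(\alpha'))$ for all $\alpha,\alpha'\in V(\aaa)$. The first is immediate: disjoint essential arcs double to disjoint essential spheres, so $\iota$ carries adjacent vertices to adjacent vertices and is $1$-lipschitz. All the work is in the reverse inequality, and for that I would re-run the inductive construction used to prove Theorem \ref{A1}, exploiting the fact that both endpoints now lie in $\iota\aaa$, where Lemma \ref{B5} collapses the multi-valued map $\Phi$ to a single arc.

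In detail, write $\sigma=\iota(\alpha)$, $\sigma'=\iota(\alpha')$, put $n=d_\sss(\sigma,\sigma')$, and fix a geodesic $\sigma=\sigma_0,\sigma_1,\dots,\sigma_n=\sigma'$ in $\sss$ (the case $n=0$ being trivial). Start from an efficient map $f_0$ with respect to $\sigma_0$; by Lemma \ref{B2} every arc of $f_0^{-1}(\sigma_0)$ is essential, and by Lemma \ref{B5} each such arc represents $\alpha$, so one may take $\alpha_0=\alpha$. Then apply Lemma \ref{B6} to each consecutive pair $(\sigma_i,\sigma_{i+1})$ in turn --- realisable disjointly since they are adjacent in $\sss$ --- to obtain maps $f_{i+1}$ homotopic to $f_i$, efficient with respect to $\sigma_{i+1}$, and agreeing with $f_i$ on the preimage of $\sigma_i$. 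The agreement condition keeps $f_{i+1}$ efficient with respect to $\sigma_i$, hence with respect to $\sigma_i\cup\sigma_{i+1}$, and keeps $\alpha_i$ an arc of $f_{i+1}^{-1}(\sigma_i)$; choosing an arc $\alpha_{i+1}$ of $f_{i+1}^{-1}(\sigma_{i+1})$, the arcs $\alpha_i$ and $\alpha_{i+1}$ are disjoint components of the embedded $1$-submanifold $f_{i+1}^{-1}(\sigma_i\cup\sigma_{i+1})$ of $S$, so $d_\aaa(\alpha_i,\alpha_{i+1})\le 1$.

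The improvement over Theorem \ref{A1} comes from carrying the induction through the last step: it produces an efficient map $f_n$ with respect to $\sigma_n=\iota(\alpha')$ together with an arc $\alpha_n$ of $f_n^{-1}(\sigma_n)$, which by Lemma \ref{B2} is essential and by Lemma \ref{B5} lies in $\Phi(\iota(\alpha'))=\{\alpha'\}$; thus $\alpha_n=\alpha'$ as a vertex of $\aaa$, rather than merely within distance $7$ of $\phi(\sigma')$. The chain $\alpha=\alpha_0,\alpha_1,\dots,\alpha_n=\alpha'$ then has consecutive distances at most $1$, giving $d_\aaa(\alpha,\alpha')\le n=d_\sss(\iota(\alpha),\iota(\alpha'))$, and combined with $\iota$ being $1$-lipschitz this is the asserted isometry. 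I do not expect a serious obstacle beyond bookkeeping; the two points to handle with care are that invoking Lemma \ref{B6} preserves efficiency with respect to the previously treated sphere --- so that $\alpha_i$ and $\alpha_{i+1}$ genuinely coexist as disjoint arcs on $S$ --- and that the identification $\alpha_n=\alpha'$ uses the full content of Lemma \ref{B5}, namely that every efficient map meets $\iota(\alpha')$ in arcs representing the single class $\alpha'$.
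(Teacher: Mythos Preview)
Your proposal is correct and follows essentially the same route as the paper: re-run the inductive path construction from the proof of Theorem~\ref{A1}, carry it through the final step to obtain $\alpha_n\in\Phi(\iota(\alpha'))$, and invoke Lemma~\ref{B5} to identify $\alpha_n=\alpha'$ (and likewise $\alpha_0=\alpha$ at the start). Your write-up is more explicit about the bookkeeping---in particular the preservation of efficiency under Lemma~\ref{B6} and the 1-lipschitz direction---but the argument is the same.
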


\begin{proof}
In other words, for each pair $ \alpha, \alpha' \in \aaa$ we have
$d_{\sss} (\iota(\alpha), \iota(\alpha')) = d_{\aaa}(\alpha,\alpha')$.
To see this, set $ \sigma = \iota(\alpha) $ and $ \sigma' = \iota(\alpha') $,
and construct the path $ \alpha_0 ,\dots, \alpha_n $ as in the
proof of Theorem \ref{B6},
this time continuing one more step to give us $ \alpha_n \in \Phi(\sigma') $.
Since $ \alpha_n \in \Phi(\iota(\alpha')) $,
Lemma \ref{B5} tells us that $ \alpha' = \alpha_n $, and so the statement follows.
\end{proof}

We conclude this section with some remarks.

We have observed that the homotopy class of the map, $ f $, depends only on the
induced map of fundamental groups.
Moreover, as mentioned in Section \ref{SA}, every element of
$ \Out(F_n) $ is induced by a self-homeomorphism of $ M $.
We therefore get a natural $ \Out(F_n) $-orbit of embeddedings of
$ \aaa $ into $ \sss $.
It would be interesting to understand how these embedded convex sets
fit together on a large scale.

We also remark that our construction of the retraction could be
interpreted in terms of homotopy equivalences of $ S $ to a graph.
(Note that if $ \Sigma $ is a sphere system which cuts $ M $ into holed
spheres, then the retraction of $ M $ onto the dual graph induces an
isomorphism of fundamental groups.
We can therefore postcompose a map of $ S $ into $ M $ with such
a retraction, and obtain arcs as the preimages of midpoints of edges.)
This construction ties in with approach in \cite{F}, though the arguments given there are quite different.

\section{Nearest point projection} \label{section 3}

In this section we will give a proof of Theorem \ref{A2}. 

As noted in the introduction, a key ingredient for the
proof is the result of \cite{HilH} that
surgery paths in $ \sss $ are quasigeodesic.
To formulate this, we need some definitions.

Let $ \sigma, \tau \subseteq M $ be embedded 2-spheres in general position.
We write $ C(\sigma,\tau) $ for the set of components of
$ \sigma \cap \tau $.

\begin{defn} \label{def bigons}
A \emph{3-bigon} consists of embedded discs, $ D \subseteq \sigma $ and
$ D' \subseteq \tau $, such that $ D $ and $ D' $ meet precisely in
their common boundary and $ D \cup D' $ bounds
a ball in $ M $.
It is \emph{innermost} if $ D \cap \tau = D' \cap \sigma = D \cap D' $.\\
A \emph{2-bigon} consists of a pair of distinct curves,
$ \alpha, \beta \in C(\sigma,\tau) $, together with arcs,
$ a \subseteq \sigma $ and $ b \subseteq \tau $, connecting $ \alpha $ to
$ \beta $, and with the same endpoints, and such that $ a \cup b $
is homotopically trivial in $ M $.\\
A \emph{bigon} is a 3-bigon or a 2-bigon.
\end{defn}

\begin{defn}
We say that $ \sigma, \tau $ are in \emph{normal position} if
there are no bigons.
\end{defn}

As we explain in Section \ref{SD}, this is easily seen to be
equivalent to the notion of ``normal position'' as used
in \cite{HenOP} and \cite{HilH} (generalising the notion
of Hatcher \cite{Hat}).
We can therefore apply the results of those papers.
In particular, the following is a consequence
of Hatcher's normal position:

\begin{lem}\label{C1}
$ \sigma, \tau $ admit a realisation in normal position.
\end{lem}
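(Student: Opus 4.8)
The plan is to derive this from Hatcher's theory of normal form for sphere systems in $M$ \cite{Hat}, together with the comparison of the two notions of normal position that will be carried out in Section \ref{SD}; granting that comparison, it is enough to realise $\sigma$ and $\tau$ in Hatcher normal position. (We may assume both spheres are essential, which is the only case we need; if one of them bounds a ball the statement is elementary.) First I would enlarge $\{\sigma\}$ to a maximal system of disjoint, pairwise non-homotopic essential spheres $\Sigma_0 \subseteq M$, so that $M$ cut along $\Sigma_0$ is a disjoint union of holed spheres. By Hatcher's theorem, $\tau$ is isotopic to a sphere $\tau'$ in normal form with respect to $\Sigma_0$: each component of $\tau'\cap R$, for each complementary piece $R$, is a disc separating the boundary spheres of $R$, and $\tau'$ meets each sphere of $\Sigma_0$ --- in particular $\sigma$ --- in the minimal number of circles over isotopies of $\tau$. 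By the discussion of Section \ref{SD}, a sphere in Hatcher normal form with respect to $\Sigma_0$ is then in normal position with each component of $\Sigma_0$ in the sense of Definition \ref{def bigons}; applying this to $\sigma$, and recalling $\tau' \aleq \tau$, yields the required realisation.

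The substance lies in the verification, carried out in Section \ref{SD}, that Hatcher normal form with respect to $\Sigma_0$ forbids bigons between $\tau'$ and a component of $\Sigma_0$. For $3$-bigons this is the standard innermost-disc argument: if $D \cup D'$ is an innermost $3$-bigon between $\sigma$ and $\tau'$, bounding a ball $B$, then using that both $\sigma$ and $\tau'$ are essential one checks that the interior of $B$ meets neither surface (otherwise one of $\sigma$, $\tau'$ would itself bound a ball), so pushing $D'$ across $B$ removes the circle $\partial D' \subseteq \sigma \cap \tau'$ without creating new ones, contradicting minimality of $|\sigma \cap \tau'|$. For $2$-bigons one must instead analyse the combinatorial pattern of the normal discs: a $2$-bigon between $\sigma$ and $\tau'$ would force two normal discs of $\tau'$ in some complementary piece to be arranged in a way incompatible with the normal form, equivalently it would permit a reduction of the number of circles, or of normal discs, while remaining normal with respect to the rest of $\Sigma_0$.

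I expect the $2$-bigon case to be the main obstacle. Unlike the $3$-bigon reduction it does not follow from naive intersection-minimisation with a single sphere, and it genuinely uses the rigidity of Hatcher's normal form --- its essential uniqueness, and the sliding lemmas underlying it --- which is exactly why this verification is deferred to Section \ref{SD}. Once that comparison is in hand, Lemma \ref{C1} is immediate; alternatively, since Section \ref{SD} identifies our notion of normal position with the one used in \cite{HenOP} and \cite{HilH}, one may simply quote the existence of normal position proved in either of those papers.
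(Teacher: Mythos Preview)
Your proposal is correct and matches the paper's approach: the paper does not give an independent proof of Lemma~\ref{C1} but rather, exactly as in your final sentence, deduces it from Hatcher's normal form theory via the equivalence with the \cite{HenOP} definition established in Section~\ref{SD} (specifically citing Proposition~1.1 of \cite{Hat} and Lemma~7.2 of \cite{HenOP}). Your first two paragraphs spell out in more detail how the reduction to Hatcher's theory goes---enlarging $\{\sigma\}$ to a maximal system and then checking that Hatcher normal form excludes bigons---which is more than the paper actually does; but note that Section~\ref{SD} does not carry out the bigon verification you sketch, and instead simply identifies the ``no bigons'' definition with conditions (N1)--(N2) of \cite{HenOP} and then quotes existence from there, so your ``alternative'' route is in fact the one the paper takes.
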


In fact, in view of Laudenbach's theorem \cite{L1}, this can be achieved
while holding either $ \sigma $ or $ \tau $ fixed.
Again, this will be explained in Section \ref{SD}.\\

Suppose that $ \sigma, \sigma' $ are in normal position.
An \emph{innermost disc} in $ \sigma' $ is an embedded disc,
$ D \subseteq \sigma' $, such that $ \partial D = \sigma \cap D $.
Now $ \partial D $ cuts $ \sigma $ into two discs, $ D_1 $ and $ D_2 $.
Let $ \tau_i = D \cup D_i $.
Thus, $ \tau_1, \tau_2 $ are embedded essential 2-spheres in $ M $.
Pushing $ D $ slightly off $ \sigma' $ on the side of the disc $ D_i $,
we can realise $ \tau_i $ to be in general position with respect
to $ \sigma' $.
We can further push $ D_i $ off itself on the side of $ D $ so
that $ \tau_i \cap \sigma = \varnothing $.
This implies that $ \sigma $ is adjacent to both
$ \tau_1 $ and $ \tau_2 $ in $ \sss $.
 We will say in this case that $\tau_i$ is obtained
by \emph{surgery} of $\sigma$ along $D$.
In practice it will be convenient in later constructions not
to carry out the second pushing operation.
It will be sufficient to know that $ \sigma $ and $ \tau_i $
are homotopically disjoint, without making them actually disjoint
as subsets of $ M $.
Therefore, when referring to a ``surgery'' henceforth we will assume
that we have pushed $ D $, but not $ D_i $.

\begin{defn}
Given $ \sigma, \sigma' \in V(\sss) $, we say that $ \tau \in V(\sss) $
is obtained by \emph{surgery} on $ \sigma $ \emph{in the direction of}
$ \sigma' $, if we can find realisations of $ \sigma, \sigma' $ in
normal position, and an innermost disc, $ D $, in $ \sigma' $, such
that $ \tau \in \{ \tau_1, \tau_2 \} $ in the above construction.
\end{defn}

Note that, in general, one may need to homotope $ \tau $ further so that it
is in normal position with respect to $ \sigma' $
(but see Lemma \ref{C4} below).

\begin{defn}
Given $ \sigma, \sigma' \in V(\sss) $, a \emph{surgery path}
from $ \sigma $ to $ \sigma' $ is a sequence
$ \sigma = \sigma_0, \sigma_1 ,\ldots, \sigma_n = \sigma' $ in
$ V(\sss) $ such that for all $ i < n $, $ \sigma_{i+1} $ is
obtained by surgery on $ \sigma_i $ in the direction of $ \sigma' $.
\end{defn}

Note that, if $\tau$ is obtained by surgery on $\sigma$ in the
direction of $\sigma'$, then $|C(\tau, \sigma')| < |C(\sigma, \sigma')|$.
From this, it is not hard to see that a surgery path between two
spheres $\sigma$ and $\sigma'$ always exists
(see Lemma \ref{C4} or the discussion in Section \ref{SD}).

The following is proven in \cite{HilH} (Theorem 1.2, thereof):

\begin{thm}\label{C2}
\cite{HilH}
Surgery paths are uniform unparameterised quasigeodesics.
\end{thm}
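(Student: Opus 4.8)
The full statement is proved in \cite{HilH}; below I sketch the route I would take, which re-derives the hyperbolicity of $\sss$ (already known from \cite{HanM}) together with the quasigeodesity of surgery paths. The plan is to apply a ``guessing geodesics'' hyperbolicity criterion of the kind used by Masur--Schleimer and Bowditch: if $G$ is a connected graph and one can assign to each pair $x,y\in V(G)$ a connected subgraph $\mathcal{L}(x,y)$ containing $x$ and $y$ so that (i) $\diam \mathcal{L}(x,y)$ is bounded above by a uniform constant whenever $d_G(x,y)\le 1$, and (ii) every triangle is uniformly thin, i.e.\ $\mathcal{L}(x,y)\subseteq N_k\bigl(\mathcal{L}(x,z)\cup \mathcal{L}(z,y)\bigr)$ for all $x,y,z$, then $G$ is hyperbolic with constants depending only on $k$, each $\mathcal{L}(x,y)$ lies within uniformly bounded Hausdorff distance of a geodesic, and --- modulo the monotonicity discussed below --- each such path is a uniform unparameterised quasigeodesic. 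I would take $\mathcal{L}(\sigma,\sigma')$ to be the union of all surgery paths between $\sigma$ and $\sigma'$; this is canonical (a surgery path involves a choice of innermost disc at each step, but taking the union removes the choice), and normal-position realisations exist by Lemma~\ref{C1}. Condition (i) is immediate: if $d_\sss(\sigma,\sigma')\le 1$ then $\sigma$ and $\sigma'$ can be realised disjointly, so $C(\sigma,\sigma')=\varnothing$ and $\mathcal{L}(\sigma,\sigma')$ has diameter at most $1$.

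The real work is condition (ii). Fix spheres $\sigma,\sigma',\sigma''$ together with surgery paths between the three pairs. I would argue that a surgery path from $\sigma$ towards $\sigma'$ fellow-travels an initial segment of a surgery path from $\sigma$ towards $\sigma''$ for as long as the two target spheres ``look the same'' from $\sigma$, that the two paths then separate near a single sphere which is coarsely the centre of the triangle, and that beyond that point one follows uniformly close to the $\sigma''$-to-$\sigma'$ side. Making this precise reduces to understanding a single innermost-disc surgery: if $\tau$ is obtained by surgery on $\sigma_i$ in the direction of $\sigma'$, then $\tau=D\cup D_j$ with $D$ a subdisc of $\sigma'$ and $D_j$ a subdisc of $\sigma_i$, so one can hope to control $|C(\tau,\sigma'')|$ in terms of $|C(\sigma_i,\sigma'')|$ and $|C(\sigma',\sigma'')|$, while keeping $\sigma''$ in normal position throughout. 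The tool for handling the bigons that obstruct normal position, and for sliding surgery discs past one another and past $\sigma''$, is bigon elimination --- the $2$- and $3$-bigons of Definition~\ref{def bigons}, removed just as the inefficient bigons are removed in Lemmas~\ref{B1}, \ref{B2} and \ref{B6}.

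The hard part, and where essentially all of \cite{HilH} goes, will be precisely this bookkeeping: quantifying the effect of the $\sigma'$-directed surgery discs on $\sigma''$, keeping everything in normal position, and ruling out the path straying far from the other two sides while its complexity $|C(\sigma_i,\sigma')|$ is still being driven towards zero. Once condition (ii) is established with a constant depending only on the genus $g$ --- as it must be, since $|C(\sigma,\sigma')|$ and the number of spheres in a system are both bounded in terms of $g$ --- the criterion gives that $\sss$ is hyperbolic and that surgery paths lie uniformly close to geodesics. The monotonicity that upgrades this to ``uniform unparameterised quasigeodesic'' is built in from the start: each surgery strictly decreases $|C(\sigma_i,\sigma')|$, so a surgery path cannot backtrack, and a non-backtracking path that stays uniformly close to a geodesic $[\sigma,\sigma']$ is an unparameterised quasigeodesic with uniform constants.
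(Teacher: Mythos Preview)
The paper does not prove Theorem~\ref{C2}; it is quoted from \cite{HilH} and used as a black box. So there is no ``paper's own proof'' to compare against, and your outline is really a sketch of the argument in \cite{HilH} rather than of anything in this paper. That said, the overall architecture you describe --- apply a guessing-geodesics/thin-triangles criterion with $\mathcal{L}(\sigma,\sigma')$ built from surgery paths, verify the short-distance and thin-triangle axioms, and deduce hyperbolicity together with quasigeodesity --- is indeed the route taken in \cite{HilH}, and your identification of the thin-triangle condition as the substantive step is correct.

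There is, however, a genuine error in your sketch. You write that condition~(ii) must hold with a constant depending only on $g$ ``since $|C(\sigma,\sigma')|$ and the number of spheres in a system are both bounded in terms of $g$''. The second clause is true but irrelevant; the first is simply false. Two essential spheres in normal position can intersect in arbitrarily many circles --- this is exactly why $\sss$ has infinite diameter and why surgery paths can be arbitrarily long. The uniform constant in the thin-triangle estimate therefore cannot come from any a priori bound on intersection number, and obtaining it is precisely the content of \cite{HilH}: one has to track how surgering towards $\sigma'$ interacts with normal position relative to a third sphere $\sigma''$, and show that the path stays close to the other two sides regardless of how large $|C(\sigma,\sigma')|$ is. Your paragraph acknowledges this is ``the hard part'', but the sentence justifying why the constant depends only on $g$ undercuts that and should be removed.

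A smaller point: the final step, upgrading ``Hausdorff-close to a geodesic'' to ``unparameterised quasigeodesic'', does not follow from the strict decrease of $|C(\sigma_i,\sigma')|$ alone. Decreasing complexity with respect to the target does not by itself rule out metric backtracking in $\sss$. In \cite{HilH} the unparameterised-quasigeodesic conclusion is part of the output of the criterion they use (which is stated for paths, not just for the sets $\mathcal{L}(x,y)$), so you should either invoke that version of the criterion directly or give a separate argument for coarse monotonicity along the geodesic.
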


Rather than recall the formal definitions, we just note here that
(given the hyperbolicity of $ \sss $)
this implies that any surgery path from $ \sigma $ to $ \sigma' $ is
a bounded Hausdorff distance from any geodesic in $ \sss $
from $ \sigma $ to $ \sigma' $, where the bound depends only on $ g $.

In this section, we will show:

\begin{prop}\label{C3}
Let $\Psi:\sss \longrightarrow \sss$ denote the map
$\iota \circ \Phi$, and let $ \sigma \in V(\sss) $.
For any $ \sigma' \in \Psi(\sigma) $, there is some surgery path,
$ \sigma = \sigma_0 ,\ldots, \sigma_n = \sigma' $, from $ \sigma $
to $ \sigma' $ in $ \sss $ such that the diameter of
$ \bigcup_{i=0}^n \Psi(\sigma_i) $ in $ \sss $ is at most 16.
\end{prop}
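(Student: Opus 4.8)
The plan is to build the surgery path in the same spirit as the path in the proof of Theorem \ref{B6}, but now running it along a surgery sequence rather than a geodesic, and to control $\Phi$ at each step by keeping a single efficient map $f$ in play throughout. Concretely, fix an efficient map $f : S \longrightarrow M$ realising $\sigma' = \iota(\alpha)$ in the sense that $f^{-1}(\sigma')$ is essentially the single arc $\alpha$ (this is possible by Lemma \ref{B5} and the discussion preceding it). Starting from $\sigma$, choose realisations of $\sigma$ and $\sigma'$ in normal position (Lemma \ref{C1}), take an innermost disc $D$ in $\sigma'$, and let $\sigma_1$ be the surgery of $\sigma$ in the direction of $\sigma'$ along $D$; iterate, using $|C(\sigma_{i+1},\sigma')| < |C(\sigma_i,\sigma')|$ to guarantee termination at $\sigma_n = \sigma'$. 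This is a genuine surgery path by construction. The content is to show $\diam_\sss\bigl(\bigcup_{i=0}^n \Psi(\sigma_i)\bigr) \le 16$.

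For this, the key observation is that each $\sigma_i$ in the surgery path is, by the surgery construction (with the convention of pushing $D$ but not $D_i$), homotopically disjoint from $\sigma'$: indeed $\tau_i \cap \sigma = \varnothing$ in that construction is exactly the statement that the surgered sphere can be realised disjoint from the sphere it was surgered toward. Hence $\sigma_i$ and $\sigma'$ are disjoint for every $i$, so $\Phi(\sigma_i), \Phi(\sigma') \subseteq \Phi(\sigma_i \cup \sigma')$, and Theorem \ref{B3} (equivalently, the bound $\diam\,\Phi(\sigma_i\cup\sigma') \le 7$) gives $d_\aaa(\phi(\sigma_i),\phi(\sigma')) \le 7$ for each $i$. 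Applying $\iota$, which is $1$-lipschitz (indeed isometric by Theorem \ref{B7}), we get $d_\sss(\Psi(\sigma_i),\Psi(\sigma')) \le 7$ for all $i$. By the triangle inequality, any two $\Psi(\sigma_i),\Psi(\sigma_j)$ are within $14$ of each other, and accounting for the bounded diameter ($\le 7$ under $\phi$, preserved under $\iota$) of the sets $\Psi(\sigma_i)$ themselves, one gets a uniform bound; the stated constant $16$ should come out of bookkeeping these $7$'s (one $7$ from $\Phi(\sigma_i\cup\sigma')$, plus a $\diam\,\Psi(\sigma_i)$ contribution, giving $7+7+\text{small}$, which one arranges to be $\le 16$).

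The step I expect to be the main obstacle is verifying carefully that the spheres $\sigma_i$ produced along the surgery path really are (homotopically) disjoint from the \emph{fixed} target $\sigma'$, and not merely from $\sigma_{i-1}$. The surgery construction as set up produces $\sigma_i$ disjoint from $\sigma_{i-1}$; one needs that surgery ``in the direction of $\sigma'$'' also keeps the result disjoint from $\sigma'$. This is plausible because the innermost disc $D$ lies on $\sigma'$ and the surgered sphere $\tau_i = D \cup D_i$ is built from $D$ together with a subdisc $D_i$ of $\sigma_{i-1}$; pushing $D$ off $\sigma'$ on the appropriate side makes $\tau_i \cap \sigma' $ have fewer components, but to get genuine homotopic disjointness from $\sigma'$ one likely needs to invoke that, since $|C(\sigma_i,\sigma')| < |C(\sigma_{i-1},\sigma')|$ strictly decreases, after re-normalising (Lemma \ref{C1}, holding $\sigma'$ fixed by Laudenbach) no bigons remain, and then trace through the normal-position analysis to conclude the intersection can in fact be removed entirely — or, more simply, use that at every stage $\sigma_i$ together with $\sigma'$ cobound the relevant region so that an efficient map for $\sigma'$ can be chosen compatibly. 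If outright disjointness is not immediate for all $i$ but only up to one more surgery, the bound $16$ instead of $14$ is presumably exactly the slack absorbing that last comparison. I would therefore spell out this disjointness claim as a short lemma (or cite the normal-position discussion of Section \ref{SD} and Lemma \ref{C4}) before assembling the estimate above.
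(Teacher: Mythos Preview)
Your ``key observation'' is based on a misreading of the surgery construction, and the resulting claim is false. In the construction, the surgered sphere $\tau_i = D \cup D_i$ can be pushed off the \emph{original} sphere $\sigma$ (the one being surgered), not off the \emph{target} sphere $\sigma'$. The sentence ``$\tau_i \cap \sigma = \varnothing$'' in the paper says exactly that: the result of surgery is adjacent to $\sigma_{i-1}$, while with respect to $\sigma'$ one only has $|C(\sigma_i,\sigma')| < |C(\sigma_{i-1},\sigma')|$. Your proposed claim, that every $\sigma_i$ along a surgery path toward $\sigma'$ is homotopically disjoint from $\sigma'$, would imply that any $\sigma$ lies at distance at most $2$ from any $\sigma'$ in $\sss$, contradicting the fact that $\sss$ has infinite diameter. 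So the disjointness lemma you propose to ``spell out'' cannot be proved, and the normal-position fixes you sketch at the end do not salvage it.

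What the paper actually establishes is the weaker (and correct) statement that each $\Psi(\sigma_i)$ \emph{contains a sphere} homotopically disjoint from $\sigma'$, not that $\sigma_i$ itself is. This requires substantially more work than your plan indicates. One first arranges (Lemmas \ref{C5} and \ref{C6}) a single map $f$ which is simultaneously efficient with respect to both $\sigma$ and $\sigma'$, with $\sigma,\sigma'$ in normal position and $f^{-1}(\sigma)\cap f^{-1}(\sigma')=\varnothing$ in $S$. The single arc $a' = f^{-1}(\sigma')$ then maps to an arc $\beta\subseteq\sigma'$ lying in at most one innermost disc of $\sigma'$; Lemma \ref{C4} guarantees a \emph{second} innermost disc $D$, disjoint from $\beta$ and hence from $f(\partial S)$, whose surgery yields a sphere already in normal position with $\sigma'$. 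Since $D\cap f(\partial S)=\varnothing$, Lemma \ref{C7} shows the surgered sphere $\sigma_1$ remains efficient with respect to $f$, and the arcs of $f^{-1}(\sigma_1)$ are (up to homotopy) among those of $f^{-1}(\sigma)$, hence disjoint from $a'$. Thus $\Psi(\sigma_1)$ contains $\iota$ of such an arc, which is disjoint from $\sigma'=\iota(a')$. The conditions (A1)--(A4) persist for $(\sigma_1,\sigma',f)$, so one iterates. The bound $16$ then comes from: some element of $\Psi(\sigma_i)$ is at distance $\le 1$ from $\sigma'$, $\diam\Psi(\sigma_i)\le 7$, so every element of $\bigcup_i\Psi(\sigma_i)$ is within $8$ of $\sigma'$.
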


To see that this implies Theorem \ref{A2}, let $ \tau \in \Pi(\sigma)$.
Given that $ \iota \aaa $ is (quasi)convex in $ \sss $, it
is easily seen, from the hyperbolicity of $ \sss $, that $ \tau $ lies
a bounded distance from any geodesic from $ \sigma $ to any point of
$ \iota \aaa $, in particular $ \sigma' $.
By Theorem \ref{C2}, it follows that there is some $ \sigma_i $ on the surgery path given by Proposition \ref{C3} so that
$ d_{\sss}(\tau,\sigma_i) $ is bounded.
Since $ \Psi $ is coarsely lipschitz, it follows that
$ \diam(\{ \tau \} \cup \Psi(\sigma_i)) $ is bounded, and so
by Proposition \ref{C3},
$ \diam(\{ \tau \} \cup \Psi(\sigma)) $ is also bounded.
Theorem \ref{A2} now follows on observing that
$ \Pi(\sigma) \ni \tau $ has bounded diameter.\\

The proof of Proposition \ref{C3} consists of several steps.\\

As a first step, Lemma \ref{C4} shows that, if we choose innermost
discs appropriately,
there will be no need to homotope the spheres obtained in surgery in
order to achieve normal position (that is, beyond pushing the innermost
disc slightly off itself after doing the surgery).

\begin{lem} \label{C4}
Let $\sigma$ and $ \sigma' $ be two essential spheres in $M$ in normal position.
Then there are at least two distinct innermost discs in $ \sigma' $,
together with spheres, obtained by surgering $\sigma$ along each of these
respective discs, which are in normal position with respect to $ \sigma' $.
\end{lem}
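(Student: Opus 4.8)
The plan is to reduce the lemma to the combinatorics of the curve system $C(\sigma,\sigma')$ viewed as a family of disjoint simple closed curves on $\sigma'\cong S^2$. We may assume $\sigma\cap\sigma'\ne\varnothing$, since otherwise there are no innermost discs and nothing to prove. These curves cut $\sigma'$ into complementary discs, and the adjacency graph of these regions is a tree; when $C(\sigma,\sigma')$ is non-empty this tree has at least two leaves, and the leaves are exactly the innermost discs of $\sigma'$. So there are already at least two innermost discs, and the content is to show that a surgery along each of them can be taken in normal position with $\sigma'$. My target is in fact the stronger assertion that \emph{every} innermost disc of $\sigma'$ admits such a surgery on at least one of the two sides; the lemma then follows from the two leaves of the region tree (and if the bad case below resisted elimination for some leaf, the same counting over the remaining leaves would still produce two good ones).

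So fix an innermost disc $D\subseteq\sigma'$ with $\partial D=c$, let $D_1,D_2$ be the two discs into which $c$ cuts $\sigma$, and let $\tau_i$ be obtained by surgering $\sigma$ along $D$ on the side of $D_i$; concretely $\tau_i=D^+\cup D_i'$, where $D^+$ is a push-off of $D$ disjoint from $\sigma'$ and $D_i'$ is $D_i$ with a small collar of $c$ removed. The first point to record is that $C(\tau_i,\sigma')=C(\sigma,\sigma')\cap\rint D_i$: the push removes $c$ and creates nothing new, and the remaining curves of $C(\sigma,\sigma')$ are partitioned between $\rint D_1$ and $\rint D_2$. In particular $c\notin C(\tau_i,\sigma')$, which is exactly what prevents the ``parallel slab'' between $D^+$ and $D$ from being counted as a bigon. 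I would then dispose of the easy bigons of $(\tau_i,\sigma')$. A $2$-bigon involves two curves of $C(\sigma,\sigma')$ inside $D_i$ together with an arc in $\tau_i$; since $\tau_i$ deformation retracts onto $D_i'\subseteq\sigma$ rel the curves of $C(\tau_i,\sigma')$, this arc pushes into $\sigma$ and we obtain a $2$-bigon of $(\sigma,\sigma')$, contradicting normal position. Likewise, a $3$-bigon $E\cup E'$ of $(\tau_i,\sigma')$ in which $E$ is the component of $\tau_i$ cut off by $\partial E$ \emph{not} containing $D^+$ has $E\subseteq D_i\subseteq\sigma$, hence is a $3$-bigon of $(\sigma,\sigma')$ — again impossible.

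What remains, and where I expect the real work to lie, is the single ``bad'' case: a $3$-bigon $E\cup E'$ of $(\tau_i,\sigma')$ with $E\subseteq\tau_i$ \emph{containing} the push-off, so $E=D^+\cup A$ for an annular piece $A\subseteq D_i$ joining $c$ to $\partial E$, and $E'\subseteq\sigma'$ a disc with $\partial E'=\partial E$, together bounding a ball $B$. The plan here is to un-push $E$ to the disc $D\cup A$, so that $D\cup A\cup E'$ forms (after a small isotopy into general position) a sphere bounding a ball, and then to play $B$ against $\sigma$ and $\sigma'$. I would take $\partial E$ to be innermost on $\sigma$ among the curves of $C(\sigma,\sigma')$ inside $D_i$, so that the sub-disc of $D_i$ it bounds is clean, and then analyse the three clean discs $D\subseteq\sigma'$, that sub-disc $\subseteq\sigma$, and $E'\subseteq\sigma'$ relative to $B$ — using once more that $\sigma,\sigma'$ have no bigons — to extract a $3$- or $2$-bigon of $(\sigma,\sigma')$, a contradiction. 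Should the bad case occur on the side $D_1$ but not $D_2$ (or conversely), I would invoke that the two push-offs sit on opposite sides of $\sigma'$ near $c$, so that a $3$-bigon through $D^+$ on one side and one through $D^-$ on the other are geometrically incompatible; hence at least one of $\tau_1,\tau_2$ is bigon-free, i.e.\ in normal position with $\sigma'$. This ``opposite-sides'' bookkeeping, together with making the un-pushing argument honest, is the main obstacle; the easy cases and the tree count are routine by comparison.

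Granting this, each of the at least two leaves of the region tree on $\sigma'$ is an innermost disc admitting a surgery of $\sigma$ in normal position with $\sigma'$, which yields the two discs required by the lemma. (The value ``two'' is precisely what is used in the proof of Proposition~\ref{C3}, where one wants a surgery direction avoiding a previously chosen sphere.)
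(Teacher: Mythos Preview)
Your reduction to the ``bad case'' is correct and your disposal of $2$-bigons and of $3$-bigons with $E\not\supseteq D^+$ is fine. The gap is entirely in the bad case, and the ``opposite sides'' heuristic you propose does not work. Concretely: suppose the innermost disc $D$ sits at the middle of a chain of three innermost discs $D',D,D''$ of $\sigma'$, with annuli $A'\subseteq D_1$ and $A''\subseteq D_2$ in $\sigma$, such that $D'\cup A'\cup D$ bounds a ball on one side of $\sigma'$ and $D\cup A''\cup D''$ bounds a ball on the other side. Then surgering $\sigma$ along $D$ on the $D_1$ side produces a $3$-bigon coming from the first ball, and surgering on the $D_2$ side produces one from the second. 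There is no geometric incompatibility between these two balls --- they meet only along $D$ and lie on opposite sides of $\sigma'$ --- so your stronger per-leaf claim (``at least one of $\tau_1,\tau_2$ is bigon-free'') fails for $D$. Your fallback (``count over the remaining leaves'') then has no traction, since you have established nothing linking the number of leaves to the number of bad ones.

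The paper's argument supplies exactly the organising structure you are missing. It defines a \emph{tube} to be a ball in $M$ bounded by two innermost discs of $\sigma'$ together with an annulus in $\sigma$ (allowing the degenerate case of a single disc), orders tubes by containment of their annular parts, and chooses a \emph{maximal} tube $T$ with end discs $D,D'$. Surgering $\sigma$ along $D$ on the side \emph{away} from $T$ and then encountering a bad $3$-bigon would, after un-pushing exactly as you describe, produce a strictly larger tube containing $T$, contradicting maximality. Thus the two end discs of a maximal tube are the two good innermost discs. In your chain example this picks out $D'$ and $D''$, not $D$. So the missing idea is not a sharper local analysis at a fixed leaf but the global choice of a maximal tube; once you have that, your un-pushing manoeuvre becomes the proof.
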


\begin{proof}
We first make the elementary observation that surgery can never
create any 2-bigons.
\par
Therefore, to make sure that a sphere obtained by surgering $\sigma$ in the direction of $ \sigma' $
is in normal position with respect to $ \sigma' $, we only need to
arrange that the surgery process does not create any 3-bigons.
\par

Now, define a \emph{tube} as a ball in $M$ whose boundary has
the form $D \cup A \cup D'$ where $D$ and $D'$ are innermost discs in
$ \sigma' $ and $A$ is an annulus in $\sigma$;
we also allow the degenerate case where $D=D'$ and $A$ is the boundary of $D$.
We call $A$ the \emph{annular part} of the tube.
\par
We say that a tube $T$ is contained in a tube $T'$ if the annular part of
$T$ is contained in the annular part of $T'$.
(This is equivalent to inclusion of the respective balls.)
\par
Now, let $T$ be a tube which is maximal under containment.
Write $ \partial = D \cup A \cup D' $, as above.
Let $ D_1 \subseteq \sigma $ be the disc with boundary $ \partial D $,
on the opposite side of $ A $ (or of $ D' $ in the degenerate case).
Then $ \sigma_1 = D \cup D_1 $ is a surgered sphere, which we push off
$ D $ so that it is in general position with respect to $ \sigma' $.
We write $ {\hat D} $ for the parallel copy of $ D $.
Let $ R \subseteq M $ be the ball with
$ \partial R = D \cup A_R \cup {\hat D} $,
where $ A_R \subseteq \sigma $ is an annulus.

We claim that $\sigma_1$ and $\sigma'$ do not form any 3-bigons.
The idea is simple if $\sigma_1$ and $\sigma'$ formed a 3-bigon,
then either $\sigma$ and $\sigma'$ formed a 3-bigon,
or the tube $T$ would not be maximal, leading to a contradiction. 

In fact, suppose there is a 3-bigon, $ E,E' $, where $ E $ is
a disc in $\sigma_1$ and $E'$ is a disc in $ \sigma' $.
Thus $ E \cup E' $ is the boundary of a ball, $ B \subseteq M $.
If the disc $E$ does not contain $ {\hat D} $, then $ E,E' $
would also be a 3-bigon for $\sigma$ and $ \sigma' $,
contradicting normal position of $\sigma$ and $ \sigma' $.
If $E$ does contain $ {\hat D} $, then denote $ E \setminus {\hat D} $
by $A'$.
Now, $ T \cap R = D $, $ B \cap R = {\hat D} $ and
$ T \cap B = \varnothing $.
We see that $ T' = T \cup R \cup B $ is a tube, with
$ \partial T' = D' \cup (A \cup A_R \cup A') \cup E' $.
It strictly contains $ T $, contradicting the maximality of $ T $.

Hence $\sigma_1$ is in normal position with
respect to $ \sigma' $.
Similarly, $ D' $ gives us another sphere in normal position with respect
to $ \sigma' $.

Note that, in the case of a degenerate tube,
surgering on either side of $ D = D' $ yields spheres which are in normal position with respect to $ \sigma' $.
\par
To conclude the proof, note that we always get at least two innermost discs
providing surgered spheres in normal position with respect to $ \sigma' $. In fact,
any non-degenerate tube will furnish two such discs, while if
there are only degenerate tubes, any two disjoint innermost discs in $ \sigma' $
will serve this purpose.
\end{proof}

As a next step, the aim of the following two lemmas is to show that,
if $\sigma$ is a sphere in $\sss$ and $ \sigma' $ is a sphere in
$\Psi(\sigma)$, then $\sigma$ and $ \sigma' $ can be represented
simultaneously in normal position with respect to each other, and
in efficient position with respect to a map $f:S \longrightarrow M$,
in such a way that $\sigma \cap f(S)$ and $ \sigma' \cap f(S)$ are disjoint.
This will be of fundamental importance in the proof of Proposition \ref{C3}.

\begin{lem}\label{C5}
Suppose that $ \sigma $, $ \sigma' $ are 2-spheres in $ M $,
and that $ f_0 : S \longrightarrow M $ is in general position,
and efficient with respect to $ \sigma $.
Then there is a map $f$ homotopic to $f_0$,
efficient with respect to both $ \sigma $ and $\sigma'$, and such that the
arcs of $f^{-1}(\sigma)$ are homotopic to those of $f_0^{-1}(\sigma)$.
\end{lem}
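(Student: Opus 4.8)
\emph{Strategy.} The plan is to imitate the bigon-elimination argument of Lemma~\ref{B1} (and Lemma~\ref{B6}), peeling off one inefficient bigon of $\sigma'$ at a time and checking that each such move can be performed without disturbing the homotopy classes of the arcs of $f^{-1}(\sigma)$. First I would normalise: by Lemma~\ref{B2} the arcs of $f_0^{-1}(\sigma)$ are all essential, and its closed-curve components bound discs in $S$ (since $f_0$ is $\pi_1$-injective), so a preliminary surgery on $f_0$ supported away from $\partial S$ removes them without changing $f_0|\partial S$, hence without affecting efficiency with respect to $\sigma$ or the arcs of $f_0^{-1}(\sigma)$. Thus we may assume $f_0^{-1}(\sigma)$ is a disjoint union of essential arcs. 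Since the conclusion depends only on the homotopy class of $\sigma'$, Laudenbach's theorem \cite{L1} lets us freely isotope $\sigma'$, and in particular we may take $\sigma$ and $\sigma'$ to be in normal position (Lemma~\ref{C1}).

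\emph{The inductive move.} Suppose $f$ (initially $f=f_0$) is not efficient with respect to $\sigma'$. Then $f^{-1}(\sigma')$ contains an inessential arc, so there is a half-disc $E\subseteq S$ with $\partial E=a\cup b$, where $a\subseteq\partial S$ and $b$ is an arc component of $f^{-1}(\sigma')$; choose $E$ minimal, so that $\rint E$ contains no such half-disc. The standard move slides $f|_E$ across $\sigma'$ by a homotopy supported in a neighbourhood of $E$ in $S$: this deletes the two points $a\cap f^{-1}(\sigma')$, creates no new intersections of $\partial S$ with $\sigma'$, and leaves $f$ (hence $f^{-1}(\sigma)$) unchanged off a neighbourhood of $E$. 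So $|f^{-1}(\sigma')\cap\partial S|$ strictly drops, and it remains only to control the effect of the move on $f^{-1}(\sigma)\cap E$.

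\emph{The crux.} An outermost arc of $f^{-1}(\sigma)\cap E$ having both endpoints on $a$ would, together with a subarc of $a\subseteq\partial S$, bound a disc in $S$, producing an inessential arc of $f^{-1}(\sigma)$ --- impossible by Lemma~\ref{B2}. Hence every arc of $f^{-1}(\sigma)\cap E$ meets $b$. In particular, if $\sigma\cap\sigma'=\varnothing$ then, since $b\subseteq f^{-1}(\sigma')$, there are no such arcs at all, the move is supported away from $\sigma$, and $f^{-1}(\sigma)$ is literally unchanged. In general the endpoints of these arcs on $b$ are points of $f^{-1}(\sigma\cap\sigma')$, and, using normal position of $\sigma$ and $\sigma'$, one arranges the slide so that near those points it runs within $\sigma$; then the slide drags the arcs of $f^{-1}(\sigma)\cap E$ along by an ambient isotopy of $S$ supported near $E$ (up to birth or death of inessential closed curves, which we again discard). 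Consequently the homotopy classes of the arcs of $f^{-1}(\sigma)$ are preserved and $f$ stays efficient with respect to $\sigma$.

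Iterating this step finitely often yields an $f$ efficient with respect to both $\sigma$ and $\sigma'$, with the arcs of $f^{-1}(\sigma)$ still homotopic to those of $f_0^{-1}(\sigma)$. I expect the genuinely delicate point to be the last one: when $\sigma$ and $\sigma'$ intersect, making precise that the slide across $\sigma'$ can be carried out consistently so that $f^{-1}(\sigma)$ changes only by isotopy --- i.e.\ controlling the interaction of the two sphere-preimages inside $E$. Everything else is a routine innermost-disc induction in the spirit of Lemmas~\ref{B1}, \ref{B2} and \ref{B6}.
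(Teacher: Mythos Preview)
Your inductive framework is natural, but the very first move has a genuine gap. You assert that if $f$ is not efficient with respect to $\sigma'$ then $f^{-1}(\sigma')$ contains an inessential arc, and then work with a half-disc $E\subseteq S$. That implication is the \emph{converse} of Lemma~\ref{B2}, not Lemma~\ref{B2} itself, and it is not justified. Non-efficiency gives you an inefficient bigon for $f(\partial S)$ and $\sigma'$ in $M$ --- an arc $b\subseteq f(\partial S)$ and an arc $c\subseteq\sigma'$ with $b\cup c$ null-homotopic in $M$ --- but the disc witnessing that null-homotopy lives in $M$, and there is no reason it should come from a half-disc in $S$. Concretely, the two endpoints of the returning arc on $\partial S$ need not be joined by a \emph{single} arc of $f^{-1}(\sigma')$ cutting off a disc; the arcs of $f^{-1}(\sigma')$ emanating from those two points may well wander off to other boundary points, and all arcs of $f^{-1}(\sigma')$ may be essential while $f$ is still inefficient. (Homological constraints rule out some naive attempts at a counterexample, but that is far from a proof.) So the half-disc $E$ you want to slide across simply may not exist.

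The paper sidesteps this entirely by working on the target side rather than the domain side. It takes the returning arc $b\subseteq f_0(\partial S)$ and the arc $c\subseteq\sigma'$, chooses $c$ to meet each circle of $\sigma\cap\sigma'$ at most once, and realises the null-homotopy as a map $h\colon D\to M$ from an abstract disc with $\partial D=h^{-1}(b)\cup h^{-1}(c)$. One then \emph{glues} $D$ onto $S$ along $f_0^{-1}(b)\subseteq\partial S$ to obtain a new surface $S_1\cong S$ with map $f_1=f_0\cup h$; this is the move that replaces $b$ by $c$ and kills one returning arc, with no need for a disc inside $S$. The point you identified as the ``crux'' --- controlling $f^{-1}(\sigma)$ through the move --- is then handled by analysing $h^{-1}(\sigma)\subseteq D$: efficiency of $f_0$ with respect to $\sigma$ forbids arcs with both ends on $h^{-1}(b)$, and normal position of $\sigma,\sigma'$ (via the ``at most once'' condition on $c$) forbids arcs with both ends on $h^{-1}(c)$. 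Hence every arc of $h^{-1}(\sigma)$ runs straight from $h^{-1}(b)$ to $h^{-1}(c)$, so gluing $D$ merely extends each arc of $f_0^{-1}(\sigma)$ by a segment, leaving its homotopy class unchanged. This replaces your vague ``arrange the slide so that near those points it runs within $\sigma$'' with a clean combinatorial statement.
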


\begin{proof}

Saying that $f_0$ is not efficient with respect to $\sigma'$ is equivalent
to saying that there is an arc $b$ in $f_0(\partial S)$
together with an arc $c$ in $\sigma'$
with the same endpoints so that $b \cup c$ is null homotopic in $M$;
we will call such
an arc $b$ a \emph{returning arc}.
We can assume that the arc $c$ intersects each circle in
$\sigma \cap \sigma'$ at most once (otherwise, we could just push it
off the disc bounded by this circle).
Now, let $h:D \longrightarrow M$ be a homotopy between the
arcs $b$ and $c$, where $D$ is a disc.

We claim that any arc in  $h^{-1}(\sigma)$ has one
extremity on $h^{-1}(b)$ and the other extremity on $h^{-1}(c)$.

By efficiency of $f_0$
with respect to $\sigma$, no arc in $h^{-1}(\sigma)$ can have both
extremities on $h^{-1}(b)$.
Suppose there is an arc $b'$ in $\sigma \cap h(D)$
having both extremities on $c$.
Let $p$ and $q$ be its extremities.
Since $\sigma$ and $\sigma'$ are in normal position, the points $p$ and $q$
lie on the same circle of $\sigma \cap \sigma'$, contradicting the
assumption that $c$ intersects each circle in $\sigma \cap \sigma'$
at most once.
This proves the claim.

Now, construct a surface, $ S_1 $, homeomorphic to $ S $, by gluing
$ D $ to $ S $.
We do so by identifying the arcs $ h^{-1} (b) \subseteq \partial D $ and  $ f_0^{-1} (b) \subseteq \partial S $ via the map $ f_0^{-1} \circ h $.
This gives $ S_1 = S \cup D $, with a natural homotopy equivalence
from $ S $ to $ S_1 $.
We define a map $f_1 = f_0 \cup h : S \cup D \longrightarrow M$,
map homotopic to $g$ (via the homotopy equivalence of $ S $ and $ S_1 $).
Moreover, the arcs in $f_1^{-1}(\sigma)$ are homotopic to those
of $f_0^{-1}(\sigma)$.
By homotoping $f_0$ to $f_1$ we have reduced the number of
returning arcs.
Continuing in the same way, we can inductively eliminate all
returning arcs and find a map
$f:S \longrightarrow M$ efficient with respect to $\sigma'$,
and such that the arcs of $f^{-1} (\sigma)$ are
homotopic to those of $f_0^{-1} (\sigma)$.
\end{proof}

\begin{lem}\label{C6}
Let $\sigma$ be a sphere in $\sss(M)$.  For any sphere
$\sigma'$ in $\Psi(\sigma)$, there exist simultaneous realisations
$ \sigma $, $ \sigma' $ and $f: S \longrightarrow M$,
such that all the following conditions hold:
\par
(A1) $\sigma$ and $\sigma'$ are in normal position;
\par
(A2) $f$ is efficient with respect to $\sigma'$;
\par
(A3) $f$ is efficient with respect to $\sigma$;
\par
(A4) $f^{-1}(\sigma) \cap f^{-1}(\sigma') = \varnothing$.
\end{lem}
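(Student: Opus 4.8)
The plan is to build the triple $(\sigma,\sigma',f)$ in stages, obtaining (A1)--(A3) from Lemmas~\ref{C1} and~\ref{C5}, and then forcing the disjointness condition (A4) by a further homotopy of $f$ that exploits normal position of $\sigma$ and $\sigma'$ together with the fact that $\sigma'$ lies in the image of $\iota$.

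First, I would unwind the hypothesis. Since $\sigma'\in\Psi(\sigma)=\iota\Phi(\sigma)$, there is an efficient map $f_0\colon S\longrightarrow M$, efficient with respect to $\sigma$, and an arc $\alpha'$ of $f_0^{-1}(\sigma)$ with $\sigma'=\iota(\alpha')$. By Lemma~\ref{C1} (using Laudenbach's theorem \cite{L1} to hold $\sigma$ fixed), realise $\sigma$ and $\sigma'$ in normal position; this is (A1). Now apply Lemma~\ref{C5} to $f_0$, $\sigma$ and $\sigma'$. This yields a map $f_1$, homotopic to $f_0$, efficient with respect to both $\sigma$ and $\sigma'$ -- i.e.\ (A2) and (A3) hold -- and with the arcs of $f_1^{-1}(\sigma)$ homotopic to those of $f_0^{-1}(\sigma)$; in particular $f_1^{-1}(\sigma)$ contains an arc $\alpha$ homotopic to $\alpha'$.

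It then remains to achieve (A4) by a further homotopy of $f_1$, carried out while keeping $\sigma$ and $\sigma'$ fixed (so (A1) persists) and preserving efficiency with respect to both (so (A2), (A3) persist). Two observations make this tractable. First, since $f_1$ is efficient with respect to $\sigma'=\iota(\alpha')$, Lemma~\ref{B5} and the discussion preceding it show that $f_1^{-1}(\sigma')$ is a single essential arc $\beta$, homotopic to $\alpha'$ and hence to $\alpha$, together with some inessential circles. Second, every component of $f_1^{-1}(\sigma)$ has zero geometric intersection number in $S$ with the class $[\beta]=[\alpha']$ (each component is disjoint from $\alpha$, which carries that class), and the inessential circles of $f_1^{-1}(\sigma')$ have zero intersection number with everything. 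Consequently, once $f_1^{-1}(\sigma)$ and $f_1^{-1}(\sigma')$ are made to meet minimally as multicurves in $S$, they are disjoint, which is precisely (A4). So the remaining task is to homotope $f_1$ so that these two multicurves realise their minimal intersection, and I would do this by the usual innermost-bigon argument: an innermost bigon between them in $S$ maps under $f_1$ to a disc in $M$ whose boundary is an arc on $\sigma$ together with an arc on $\sigma'$ meeting in two points of $\sigma\cap\sigma'$, with null-homotopic union; normal position of $\sigma$ and $\sigma'$ (absence of $2$- and $3$-bigons, Definition~\ref{def bigons}) then either rules this configuration out or lets the bigon be removed by a homotopy of $f_1$ supported near it, and the innermost discs cut off by the inessential circles are handled the same way. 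One checks that these homotopies, being local, do not create inefficient bigons for $f_1(\partial S)$ against $\sigma$ or against $\sigma'$, so efficiency survives. Iterating gives the desired $f$.

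I expect this last stage to be the main obstacle: the delicate points are verifying that each bigon can genuinely be removed (rather than merely that it cannot obstruct normal position of $\sigma,\sigma'$) and that no removal reintroduces inefficiency, together with the bookkeeping for bigons or discs that meet $\partial S$ and for the interaction between the interior circles of $f^{-1}(\sigma)$ and those of $f^{-1}(\sigma')$. A more concrete alternative for (A4) would be to realise $\sigma'$ directly: tube two parallel push-offs of $\sigma$ along the band in which $f_1(S)$ crosses $\sigma$ near $\alpha$, pushed transverse to $f_1(S)$; the resulting sphere is isotopic to $\iota(\alpha')$, and its preimage under $f_1$ is visibly a push-off of $f_1^{-1}(\sigma)$ with the component over $\alpha$ rerouted, hence disjoint from $f_1^{-1}(\sigma)$ -- at the cost of re-establishing normal position with $\sigma$ and efficiency afterwards.
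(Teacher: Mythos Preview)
Your outline follows the paper's proof closely through (A1)--(A3) and the reduction of (A4) to eliminating bigon-type intersections in $S$ between $f_1^{-1}(\sigma)$ and $f_1^{-1}(\sigma')$. The gap is exactly where you flag it: how normal position of $\sigma,\sigma'$ actually lets you remove a 2-gon $B\subseteq S$. Normal position does not rule such 2-gons out; they occur. What the absence of 2-bigons in $M$ buys you is only that the two vertices $p,q$ of $B$ must map under $f_1$ to the \emph{same} circle $\gamma\in C(\sigma,\sigma')$ (otherwise embedded arcs in $\sigma$ and $\sigma'$ homotopic rel endpoints to the images of the two sides of $B$ would exhibit a 2-bigon). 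The paper then exploits this as follows: take the discs $C\subseteq\sigma$ and $C'\subseteq\sigma'$ bounded by $\gamma$ that contain the images of the two sides of $B$, glue them along $\gamma$ to form an abstract sphere $G$, and thicken the inclusion to a locally injective map $\theta\colon G\times[-1,1]\to M$ with $\theta^{-1}(\sigma)\cap\theta^{-1}(\sigma')\subseteq G\times\{0\}$. Since $f_1|\partial B$ factors through $G$, one can replace $f_1$ on a disc neighbourhood $\hat B\supseteq B$ by any map landing in $\theta(G\times\{1\})$; the new $f_1(\hat B)$ then misses $\sigma\cap\sigma'$, so $|f_1^{-1}(\sigma)\cap f_1^{-1}(\sigma')|$ drops. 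This is the missing idea, and it is not a routine ``push across the bigon in $M$'' move, because $f_1$ is not an embedding and $f_1(B)$ can be arbitrarily complicated inside $M$.

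The paper also treats 3-gons (those meeting $\partial S$) by a separate, simpler device: it excises a small regular neighbourhood of the 3-gon from $S$, which amounts to a homotopy of $f_1$ moving $\partial S$. The point you would need to check --- and the paper does --- is that once all 2-gons are gone, any component of $f_1^{-1}(\sigma)$ or $f_1^{-1}(\sigma')$ crossing an internal side of a 3-gon must terminate on its boundary side, so $|\sigma\cap f_1(\partial S)|$ and $|\sigma'\cap f_1(\partial S)|$ are unchanged and efficiency survives. Your alternative of rebuilding $\sigma'$ by tubing trades this difficulty for re-establishing (A1) and (A2) afterwards, so it does not obviously simplify matters.
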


\begin{proof}
Let $\sigma$ be a sphere in $\sss(M)$ and let $f_0:S \longrightarrow M$
be a map efficient with respect to $\sigma$.
Let $a$ be an arc in $f_0^{-1}(\sigma)$ and let $\sigma'$ be the
sphere $\iota(a)$. Note that in this way we obtain all the spheres in $\Psi(\sigma)$.
Up to changing $\sigma'$ by homotopy,
we can suppose that $\sigma'$ and $\sigma$ are in normal position,
i.e. they satisfy (A1).
For the rest of the proof, $ \sigma $ and $ \sigma' $ will remain fixed.
\par
Now, the map $f_0$ is efficient with respect to $\sigma$,
but not necessarily with respect to $\sigma'$.
\par
By Lemma \ref{C5}, there is a map $f$ homotopic to $f_0$,
efficient with respect to $\sigma'$, and such that the arcs of
$f^{-1}(\sigma)$ are homotopic to those of $f_0^{-1}(\sigma)$.
This gives us properties (A2) and (A3).
\par
To obtain (A4), note that, since $f$ is efficient with respect
to $\sigma'$, by Lemma \ref{B5},
$f^{-1} (\sigma')$ contains only one arc, which is
homotopic to the arc $a$ (namely that originally used
to define the homotopy class of $ \sigma' $).
We will simply denote this arc by $ a $.
\par
It remains to remove any intersection points between
$f^{-1}(\sigma)$ and $f^{-1}(\sigma')$.
\par
To this end, we define a \emph{2-gon} in $ S $ as a disc whose boundary consists of
a pair of arcs in $ f^{-1}(\sigma) $ and $ f^{-1}(\sigma') $ respectively.
We refer to the intersection points of these arcs as the \emph{vertices}
of the 2-gon.
Similarly, we define a \emph{3-gon} in $ S $, to be
a disc whose boundary consists of
a pair of arcs in $ f^{-1}(\sigma) $ and $ f^{-1}(\sigma') $
(referred to as the \emph{internal sides}) together
with an arc in $ \partial S $ (referred to as the \emph{boundary side}).
The intersection point of the two internal sides is referred to as the
\emph{vertex} of the 3-gon.
\par
Now by construction, no arc of $ f^{-1}(\sigma) $ can cross $ a $;
that is they are all homotopically disjoint or equal to $ a $.
Moreover, all curves in $f^{-1}(\sigma) $ and $ f^{-1}(\sigma')$ bound
discs in $ S $.
From this, we see that all intersection points of $ f^{-1}(\sigma) $
and $ f^{-1}(\sigma') $ are vertices of 2-gons or 3-gons.
It is therefore sufficient to eliminate these.
\par
Let $ B $ be a 2-gon, with vertices $ p,q $, with
edges $ b \subseteq f^{-1}(\sigma) $ and $ b' \subseteq f^{-1}(\sigma') $.
Since $ \sigma $ and $ \sigma' $ are in normal position,
there are no 2-bigons between them.
Therefore $ f(p) $ and $ f(q) $
have to lie on the same circle $ \gamma $ in $\sigma \cap \sigma'$.
Let $ C \subseteq \sigma $ and $ C' \subseteq \sigma' $ be
the discs bounded by $ \gamma $, respectively containing
$ f(b) $ and $ f(b') $.
(Note that these arcs need not be embedded.)
Let $ G $ be the abstract 2-sphere obtained by gluing
$ C $ and $ C' $ along $ \gamma $, and let
$ \theta : G \longrightarrow M $ be the map which combines the
inclusions of $ C $ and $ C' $.
This is locally injective, and it extends to a locally injective map,
$ \theta : G \times [-1,1] \longrightarrow M $, where we identify
$ G \equiv G \times \{ 0 \} $.
We can suppose that $ \theta^{-1} (\sigma) \cap \theta^{-1} (\sigma')
\subseteq G \times \{ 0 \} $.
Note that, $ f | \partial B $ factors through a map
$ F : \partial B \longrightarrow G $, so that
$ f|\partial B = \theta \circ F $
(where $ \partial B = b \cup b' $).
We can also assume that $ f $ is transversal to $ \theta $.
In particular, we can find an annular neighbourhood,
$ A \subseteq S \bksl \partial S $, of $ \partial B $,
and an extention, $ F : A \longrightarrow G \times [-1,1] $, of $ F $ with
$ f|A = \theta \circ F|A $, and with $ F(\partial A)
\subseteq G \times \{ -1,1 \} $.
Now $ {\hat B} = B \cup A \subseteq S $ is a disc with
$ \partial {\hat B} \subseteq \partial A $, and we can suppose
that $ F(\partial {\hat B}) \subseteq G \times \{ 1 \} $.
Let $ h : {\hat B} \longrightarrow G \times \{ 1 \} $
by any continuous map with $ h|\partial {\hat B} = F|\partial {\hat B} $.
Note that $ f|\partial {\hat B} = \theta \circ h |\partial {\hat B} $.
We now modify $ f $ by replacing $ f|{\hat B} $ by
$ \theta \circ h $.
Note that we now have $ {\hat B} \cap f^{-1}(\sigma) \cap f^{-1}(\sigma')
= \varnothing $ (since $ f({\hat B}) \cap \sigma \cap \sigma' \subseteq
\theta(G \times \{ 1 \}) \cap \sigma \cap \sigma' = \varnothing $).
We have therefore reduced $ |f^{-1}(\sigma) \cap f^{-1}(\sigma')| $.
We can perturb $ f $ sligthly to ensure that it remains in general position.
This surgery does not change the induced map of fundamental
groups, $ \pi_1(S) \longrightarrow \pi_1(M) $.
The new map $ S \longrightarrow M $ is therefore homotopic to
the original.
(The homotopy between them might not fix $ \partial S $, but this does
not matter.)
\par
Continuing in this manner, we eventually remove all 2-gons.
Note that we have not touched $ f|\partial S $ in this process,
so (A2) and (A3) remain valid.
\par
Now any 3-gon can be eliminated simply by removing a small regular
neighbourhood of it in $ S $.
Given that there are now no 2-gons, any component of $ f^{-1}(\sigma) $
or of $ f^{-1}(\sigma') $ which crosses an internal edge of the 3-gon
must terminate on the boundary edge.
It follows that this operation does not change
$ |\sigma \cap f(\partial S)| $ or $ |\sigma' \cap f(\partial S)| $,
so $ f $ remains efficient.
In other words, (A2) and (A3) again remain valid.
The process does not reintroduce any 2-gons, so continuing in this way,
we eventually remove all 3-gons.
This finally achieves (A4).
\end{proof}

The next lemma is aimed at showing that, while surgering $\sigma$
towards $\sigma'$, we can keep spheres efficient with respect to a given
map $f:S \longrightarrow M$.

Recall that an embedded curve, $ \gamma \subseteq M $, is
\emph{efficient} with respect to a sphere, $ \sigma \subseteq M $,
if $ |\gamma \cap \sigma| $ is mininal in the homotopy class of $ \gamma $.

\begin{lem}\label{C7}
Let $\sigma$ be a sphere in $M$, let $\gamma$ be an embedded curve,
and let $D$ be a disc disjoint from $\gamma$ with
$\partial D \subset \sigma$.
Suppose $ \gamma $ is efficient with respect to $ \sigma $.
Then $ \gamma $ is efficient with respect to
both spheres obtained by surgering $\sigma$ along $D$.
\end{lem}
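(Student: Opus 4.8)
The plan is to argue by contradiction, reducing non-efficiency of $\gamma$ with respect to a surgered sphere to non-efficiency with respect to $\sigma$ itself, using the inefficient-bigon criterion that is already exploited throughout Section \ref{SB}.

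First I would fix notation for the surgery. The circle $\partial D \subseteq \sigma$ cuts $\sigma$ into two discs $D_1, D_2$, and the two spheres produced by surgery are $\tau_i = \hat D \cup D_i$, where $\hat D$ is a copy of $D$ pushed slightly off $\sigma$ to the $D_i$ side; thus $\hat D \cap \sigma = \partial \hat D$ is a circle in the interior of $D_i$, bounding a subdisc $D_i^\circ \subseteq D_i$ with $\tau_i = \hat D \cup D_i^\circ$ (glued along $\partial \hat D$). Since $\gamma$ is compact and disjoint from $D$, I would take the push small enough that $\hat D$ is disjoint from $\gamma$ and from the finitely many points of $\gamma \cap \sigma$. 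Then $\gamma \cap \tau_i = \gamma \cap D_i^\circ = \gamma \cap D_i$, and in particular $|\gamma \cap \tau_1| + |\gamma \cap \tau_2| = |\gamma \cap \sigma|$. By the symmetry between $D_1$ and $D_2$ it is enough to treat $\tau := \tau_1$, and I may assume $\gamma \cap \tau \ne \varnothing$, the other case being trivial.

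Now suppose $\gamma$ is not efficient with respect to $\tau$. As in the proof of Lemma \ref{B1}, there is then an inefficient bigon: arcs $a \subseteq \tau$ and $b \subseteq \gamma$ with common endpoints $p, q \in \gamma \cap \tau$ such that $a \cup b$ is null-homotopic in $M$. Since $\gamma \cap \tau = \gamma \cap D_1^\circ$, the points $p, q$ lie in $D_1^\circ \subseteq \sigma$; in particular they are not on $\hat D$. Because $\tau \cong S^2$ is simply connected, any two arcs in $\tau$ with endpoints $p$ and $q$ are homotopic rel endpoints through $\tau$; hence $a$ is homotopic, rel $\{p, q\}$ and through $\tau \subseteq M$, to an arc $a'$ lying entirely in $D_1^\circ \subseteq \sigma$. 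Concatenating this homotopy with the null-homotopy of $a \cup b$ shows that $a' \cup b$ is null-homotopic in $M$. But $a' \subseteq \sigma$, $b \subseteq \gamma$, they share the endpoints $p, q \in \gamma \cap \sigma$, so $a' \cup b$ is an inefficient bigon for $\sigma$, contradicting the hypothesis that $\gamma$ is efficient with respect to $\sigma$. Thus $\gamma$ is efficient with respect to $\tau_1$, and, by symmetry, with respect to $\tau_2$.

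The step I expect to require the most care is sliding the bigon arc $a$ off $\hat D$ into $\sigma$: this is precisely where the hypothesis $D \cap \gamma = \varnothing$ enters, since it forces the bigon's endpoints onto $D_1 \subseteq \sigma$, so that the slid arc $a'$ genuinely yields a bigon for $\sigma$ based at intersection points of $\gamma$ with $\sigma$. The remaining ingredients are already standard in the paper: that an inefficient bigon obstructs efficiency (cf.\ Lemma \ref{B1}), and that a surgered sphere can be put in general position with respect to the fixed curve $\gamma$ by an arbitrarily small push of the surgery disc. I would also record the harmless degenerate possibilities (for instance $a$ already contained in $D_1$, or $\gamma$ disjoint from one of the $\tau_i$), which are immediate.
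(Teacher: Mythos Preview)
Your argument is correct and is essentially the same as the paper's: both proceed by contradiction, take an inefficient bigon (the paper calls it a ``returning arc'') for the surgered sphere, slide the sphere-side arc off the surgery disc using that the endpoints already lie in $D_i \subseteq \sigma$, and thereby produce an inefficient bigon for $\sigma$. Your write-up is somewhat more explicit about the push-off and the location of $\gamma \cap \tau_i$, but the content is identical.
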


\begin{proof}
Suppose $ \gamma $ is not efficient with repect to one of the
spheres, $ \sigma_1 $, obtained by surgery.
Then there is a returning arc, $ a \subseteq \gamma $, for $ \sigma_1 $.
That is, its endpoints lie in $ \sigma_1 $, and it is homotopic, in $ M $,
to to an arc $ b \subseteq \sigma_1 $.
But now $ b $ can be homotoped off $ D $ in $ \sigma_1 $.
It follows that $ a $ is also a returning arc for $ \sigma $,
contradicting the fact that $ \gamma $ is efficient with respect to $ \sigma $.
\end{proof}

As an immediate consequence of Lemma \ref{C7},
if $\sigma$ is a sphere in $M$ efficient with respect to a map
$f :S \longrightarrow M$, and $D$ is a disc disjoint from $\partial S$ with
$\partial D \subseteq \sigma$, then the two spheres obtained by surgering
$\sigma$ along $D$ are both efficient with respect to $f :S \longrightarrow M$.

We are now ready to prove the following:

\begin{lem} \label{C8}
Let $\sigma$ be a sphere in $\sss$, then for any $\sigma'$ in $\Psi(\sigma)$,
there exists some surgery path,
$\sigma = \sigma_0, \sigma_1 ,\dots, \sigma_n =\sigma'$,
so that for each $i$ the set $\Psi(\sigma_i)$ contains a sphere
homotopically disjoint from $\sigma'$.
\end{lem}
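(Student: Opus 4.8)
The plan is to combine the normal-position surgery machinery (Lemmas \ref{C4}, \ref{C7}) with the simultaneous-realisation Lemma \ref{C6} to produce the surgery path one step at a time, carrying along a map $f$ that stays efficient with respect to all the relevant spheres. The point is that $\sigma'=\iota(\alpha)$ for some arc $\alpha$, and the sphere $\sigma'$ carries a distinguished ``slab'' structure: it is the double of the disc $\alpha\times[0,1]$. So the natural way to see that $\Psi(\sigma_i)$ meets a sphere disjoint from $\sigma'$ is to produce, at each stage, a map $f_i$ efficient with respect to both $\sigma_i$ and $\sigma'$, with $f_i^{-1}(\sigma_i)\cap f_i^{-1}(\sigma')=\varnothing$; then an arc of $f_i^{-1}(\sigma_i)$ defines an element of $\Psi(\sigma_i)$, while $f_i^{-1}(\sigma')$ is the single arc $\alpha$, and disjointness of the arc systems in $S$ should translate into homotopic disjointness of the corresponding spheres from $\sigma'=\iota(\alpha)$ (via $\iota$ taking disjoint arcs to disjoint spheres).

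Concretely, I would proceed by induction on $|C(\sigma,\sigma')|$, the number of intersection circles. First invoke Lemma \ref{C6} to get simultaneous realisations of $\sigma=\sigma_0$, $\sigma'$ and $f:S\longrightarrow M$ satisfying (A1)--(A4). If $\sigma$ is already disjoint from $\sigma'$ there is nothing to do (and $\sigma\in\Psi$-related material is already disjoint). Otherwise, apply Lemma \ref{C4} to choose an innermost disc $D\subseteq\sigma'$ such that the surgered sphere $\sigma_1$ (obtained by surgering $\sigma$ along $D$) is again in normal position with respect to $\sigma'$, and such that $|C(\sigma_1,\sigma')|<|C(\sigma,\sigma')|$. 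Since the arc $\alpha=f^{-1}(\sigma')$ is disjoint from $f^{-1}(\sigma)$ by (A4), and $D$ sits inside $\sigma'$, one can arrange the disc $D$ to be disjoint from $f(S)$ after a small homotopy (pushing $D$ off $f(\partial S)$, using that $f(\partial S)$ is efficient with respect to $\sigma'$ so meets $\sigma'$ minimally). Then Lemma \ref{C7}, or rather the remark immediately following it, shows $\sigma_1$ is still efficient with respect to $f$. Re-applying Lemma \ref{C6} (or directly re-running its proof with the new $\sigma_1$ in place of $\sigma$) restores (A1)--(A4) for $\sigma_1,\sigma',f_1$, and we iterate. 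After finitely many steps we reach $\sigma_n=\sigma'$, and at each stage the arc of $f_i^{-1}(\sigma_i)$ yields a member of $\Psi(\sigma_i)$ that is, via $\iota$ applied to an arc disjoint from $\alpha$, homotopically disjoint from $\sigma'$.

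**The main obstacle** I anticipate is the bookkeeping in making ``$D$ disjoint from $f(S)$'' compatible with ``$\sigma_1$ efficient with respect to $f$'' simultaneously: Lemma \ref{C7} as stated needs the disc $D$ genuinely disjoint from $\gamma=f(\partial S)$, and after surgering one must re-establish efficiency with respect to $\sigma'$ as well, which by Lemma \ref{C5}/\ref{C6} may require homotoping $f$ — and that homotopy could in principle disturb the intersection pattern with $\sigma_1$ that we just cleaned up. The resolution is to observe that Lemma \ref{C6}'s proof only homotopes $f$ by eliminating returning arcs, 2-gons and 3-gons, operations that can be performed in the complement of a neighbourhood of $\sigma_1\cap f(S)$ once (A4)-type disjointness with $\sigma_1$ is arranged — i.e.\ one should apply Lemma \ref{C6} with the roles of the two spheres chosen so that efficiency with respect to $\sigma_1$ is the ``already established'' hypothesis (A3) and efficiency with respect to $\sigma'$ is the one being fixed, exactly as in the statement of Lemma \ref{C5}. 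A secondary point to check is that the arc of $f_i^{-1}(\sigma_i)$ used at step $i$ and the arc $\alpha$ being disjoint in $S$ really does force the spheres to be homotopically disjoint in $M$: this is exactly the elementary observation from the introduction that $\iota$ sends disjoint arcs to disjoint spheres, applied here after noting $\sigma'=\iota(\alpha)$ and that some member of $\Psi(\sigma_i)$ is $\iota$ of an arc disjoint from $\alpha$.
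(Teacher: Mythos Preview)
Your overall strategy and choice of ingredients match the paper's proof, but you miss a key simplification that dissolves your ``main obstacle'' entirely, and your handling of the choice of innermost disc is imprecise.

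\textbf{Choice of $D$.} You write that one can ``arrange the disc $D$ to be disjoint from $f(S)$ after a small homotopy''. No homotopy is needed, and disjointness from all of $f(S)$ is neither achievable nor required --- only disjointness from $f(\partial S)$ matters for Lemma~\ref{C7}. The clean argument is: by (A4) the single arc $a' = f^{-1}(\sigma')$ satisfies $f(a')\subseteq\sigma'\setminus\sigma$, so $f(a')$ (and in particular the two points $f(\partial S)\cap\sigma'=f(\partial a')$) lies in a single component of $\sigma'\setminus\sigma$, hence in at most one innermost disc. Since Lemma~\ref{C4} furnishes \emph{two} innermost discs yielding surgered spheres in normal position, at least one of them already avoids $f(\partial S)$.

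\textbf{The inductive step.} You propose to re-apply Lemma~\ref{C6} (or re-run its proof) to obtain a new map $f_1$ satisfying (A1)--(A4) for $\sigma_1,\sigma'$. This is where your obstacle arises, and it is self-inflicted: the paper never changes $f$. After surgering along a disc $D$ disjoint from $f(\partial S)$, the \emph{same} $f$ already satisfies (A1)--(A4) with $\sigma_1$ in place of $\sigma$. Indeed, (A1) holds by the choice of $D$ via Lemma~\ref{C4}; (A2) is untouched; (A3) follows from Lemma~\ref{C7}; and (A4) holds because $\sigma_1\cap\sigma'\subseteq\sigma\cap\sigma'$ (the surgered sphere consists of a piece of $\sigma$ together with a pushed-off copy of $D$ that no longer meets $\sigma'$), so $f^{-1}(\sigma_1)\cap f^{-1}(\sigma')\subseteq f^{-1}(\sigma)\cap f^{-1}(\sigma')=\varnothing$. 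Thus the induction runs with $f$ fixed throughout, and the arcs of $f^{-1}(\sigma_i)$ are, up to homotopy, among those of $f^{-1}(\sigma)$, hence disjoint from $a'$; this is what gives a sphere in $\Psi(\sigma_i)$ homotopically disjoint from $\sigma'$. Note also that Lemma~\ref{C6} as stated requires $\sigma'\in\Psi(\sigma_1)$, which you have not established, so your proposed re-application does not apply as written.
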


\begin{proof}
Let $\sigma$ be a sphere in $\sss$, let $h:S \longrightarrow M$
be a map efficient with respect to $\sigma$, let $a$ be an arc
in $h^{-1} (\sigma)$ and let $\sigma'$ be the sphere $\iota(a)$.
Note that through the process we just described we can obtain any sphere in $\Psi(\sigma)$.

Let $f :S \longrightarrow M$ be a map so that
$\sigma$, $\sigma'$, and $f$ satisfy conditions (A1)--(A4) of Lemma \ref{C6}.
For the rest of the proof, $f :S \longrightarrow M$, $\sigma$ and $\sigma'$ will be fixed.

Now, $f$ is efficient with respect to $\sigma'$, and therefore,
by Lemma \ref{B5}, $f^{-1}(\sigma')$ contains exactly one arc, call it $a'$, homotopic to $a$,
and possibly some inessential closed curves.
Denote $f(a')$ by $\beta \subseteq \sigma' $.

By property (A4) in Lemma \ref{C6}, $\beta$ is disjoint from $\sigma$.
This implies that $\beta$ is contained in at most one innermost disc
of $\sigma'$.
Hence, by Lemma \ref{C4}, there exist an innermost disc $D$
in $\sigma'$ disjoint from $f(\partial S)$, and a sphere $\sigma_1$
obtained by surgering $\sigma$ along $D$,
so that $\sigma_1$ is in normal position with respect to $\sigma'$.
(Recall we have pushed the disc $ D $ slightly off itself in forming
$ \sigma_1 $.)

Now, since $D$ is disjoint from $f(\partial S)$,
by Lemma \ref{C7} the sphere $\sigma_1$
is also efficient with respect to $f$.
\par
By construction and since the disc $D$ is disjoint from $f(\partial S)$,
each arc in $f^{-1} (\sigma_1)$ is, up to homotopy,
also contained in  $f^{-1} (\sigma)$.
This means that any arc in $f^{-1} (\sigma_1)$ is at distance at most $1$
in $\aaa$ from the arc $a$.
Consequently, by efficiency of $\sigma_1$ with respect to $f$,
the projection of $\sigma_1$ contains a sphere disjoint from $\sigma'$.

Now consider the spheres $\sigma_1$ and $\sigma'$.
They are in normal position with respect to each other, the map $f: S \longrightarrow M$
is efficient with respect to both spheres, and moreover,
$f(S) \cap \sigma_1$ is disjoint from
$f(S) \cap \sigma'$ (since $f(S) \cap \sigma$ is disjoint from $f(S) \cap \sigma'$ and $\sigma_1 \cap \sigma'$
is contained in $\sigma \cap \sigma'$).

Summarising, $\sigma_1$, $\sigma'$ and
$f:S \longrightarrow M$ also satisfy conditions (A1)--(A4) of Lemma \ref{C6}.

Therefore, using the same argument as above, we can find
an innermost disc $D'$ in $\sigma'$ disjoint from $f(\partial S)$,
and a sphere $\sigma_2$, obtained by  surgering $\sigma_1$ along $D'$,
in normal position with respect to $\sigma_1$, and efficient with
respect to $f$.
By construction arcs in $f^{-1}(\sigma_2)$ are contained, up to homotopy,
in $f^{-1}(\sigma_1)$ and therefore are homotopically disjoint from $a$,
consequently the projection of $\sigma_2$ contains a sphere homotopically
disjoint from $\sigma'$.

As above one can check that $\sigma_2$,
$\sigma'$ and $f:S \longrightarrow M$ satisfy the conditions of Lemma \ref{C6}.

We proceed inductively in this way, until we obtain a sphere
$\sigma_{n-1}$ which is disjoint from $\sigma'$.
\par
The path
$\sigma, \sigma_1, \sigma_2 ,\ldots, \sigma_{n-1}, \sigma_n=\sigma'$
is a surgery path satisfying the desired property.
\end{proof}

Proposition \ref{C3} immediately follows from Lemma \ref{C8},
having noticed that $\Psi$ is defined up to distance 7.

This completes the proof of Theorem \ref{A2}.

\section{Equivalence between definitions of normal position}\label{SD}

In this section, we briefly review the notion of ``normal position''
for two 2-spheres in a 3-manifold, which was used in Section \ref{section 3}.
Here we only consider the case where the 3-manifold, $ M $,
is a doubled handlebody.
(A similar discussion would apply to a general compact 3-manifold, by
taking the connected sum decomposition into irreducible components.)

First recall that Hatcher defined a notion of ``normal form'' for
a system of disjoint spheres with respect to a maximal sphere system.
In \cite{HenOP} this was generalised to non-maximal systems.
This gives a more symmetric notion.
Their notion is used for the surgery
construction in \cite{HilH}.
Other discussions of this notion can be found in \cite{GP} and \cite{I}.
Here, for simplicity of exposition, we only consider the intersection
of two spheres (though the arguments readily adapt to two sphere systems).

Let $ \sigma, \tau \subseteq M $ be embedded 2-spheres in general position.
Lemma 7.2 of \cite{HenOP} shows that the following is equivalent to
their notion of ``normal position'', which here can serve as a definition:

\begin{defn} \label{HOP normal form}
$ \sigma, \tau $ are in \emph{normal position} if\\
(N1) each lift of $ \sigma $
to the universal cover, $ {\tilde M} $,
meets each lift of $ \tau $ in at most one curve, and\\
(N2)  no complementary component of the union of two
such lifts is a ball in $ {\tilde M} $.
\end{defn}

It is easy to see that this is equivalent to saying that there are
no bigons, hence to saying that they are in ``normal position''
as we defined it in Section \ref{section 3}.
In fact, condition (N1)
(respectively condition (N2)) is
equivalent to saying that $\sigma$ and $\tau$ do not form any 2-bigons
(respectively any 3-bigons).

As noted in Section \ref{section 3} (Lemma \ref{C1}),
a normal position for two spheres $\sigma$ and $\tau$ always exists.
This is a straightforward consequence of Proposition 1.1 of \cite{Hat}
and Lemma 7.2 in \cite{HenOP}.

Indeed, normal position is equivalent to minimal intersection of two spheres.
This can be deduced from the discussion in \cite{GP}.
However,  for the sake of completeness,
we give below another proof.

To clarify terminology, recall that
$ C(\sigma,\tau) $ is the set of components of $ \sigma \cap \tau $.
Let $ \kappa(\sigma,\tau) $ be the minimum of $ |C(\sigma,\tau)| $,
as $ \sigma $ and $ \tau $ vary over their respective homotopy classes.
Note that, in view of Laudenbach's theorem, in order to define
$ \kappa(\sigma,\tau) $, we could hold $ \sigma $ fixed,
and just allow $ \tau $ to vary in its homotopy class.

\begin{lem}\label{D1}
$ \sigma, \tau $ are in normal position if and only if
$ |C(\sigma,\tau)| = \kappa(\sigma,\tau) $.
\end{lem}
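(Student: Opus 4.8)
The plan is to prove the two implications separately, using the characterization of normal position in terms of the absence of 2-bigons and 3-bigons (equivalently conditions (N1), (N2)), together with the notion of an ``inefficient bigon'' already exploited in Lemma \ref{B1}.

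First I would dispose of the easy direction: if $|C(\sigma,\tau)| = \kappa(\sigma,\tau)$, then $\sigma,\tau$ are in normal position. I argue contrapositively. Suppose $\sigma,\tau$ are \emph{not} in normal position, so there is a bigon. If it is a 2-bigon, then as in the proof of Lemma \ref{B1} we have arcs $a\subseteq\sigma$, $b\subseteq\tau$ with common endpoints connecting two distinct curves $\alpha,\beta\in C(\sigma,\tau)$, with $a\cup b$ null-homotopic in $M$. Since $M$ is aspherical in the relevant sense — more precisely $\pi_2$ of the relevant cover is controlled, but concretely $a\cup b$ bounds a disc — we may homotope $\tau$ across a disc bounded by $a\cup b$ so as to slide $b$ onto $a$ and then off $\sigma$, removing at least the two intersection curves $\alpha,\beta$ and not creating new ones. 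Hence $|C(\sigma,\tau)|$ was not minimal. If instead the bigon is a 3-bigon, i.e.\ discs $D\subseteq\sigma$, $D'\subseteq\tau$ meeting exactly in $\partial D=\partial D'$ with $D\cup D'$ bounding a ball $B\subseteq M$, then pushing $D$ across $B$ to the other side of $D'$ (and slightly beyond) removes the curve $\partial D=\partial D'$ from $\sigma\cap\tau$ without creating new intersections, so again $|C(\sigma,\tau)|$ was not minimal. Either way $|C(\sigma,\tau)|>\kappa(\sigma,\tau)$.

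For the converse — normal position implies minimal intersection — I would fix $\sigma$ in normal position with respect to $\tau$ and suppose for contradiction that $\tau$ can be homotoped to $\tau''$ with $|C(\sigma,\tau'')|<|C(\sigma,\tau)|$. By Lemma \ref{C1} (and the remark after it, using Laudenbach) we may take a realisation in normal position holding $\sigma$ fixed; so without loss of generality both $\tau$ and $\tau''$ are each in normal position with respect to $\sigma$, and it suffices to show that any two normal-position realisations of the same homotopy class relative to $\sigma$ have the same number of intersection curves. The natural tool here is the standard ``innermost disc / surgery'' argument on an intermediate surface: consider a homotopy from $\tau$ to $\tau''$, realise it by a map $G: \sigma'\!\times[0,1]\to M$ transverse to $\sigma$ (where $\sigma'$ is an abstract $2$-sphere), and analyse $G^{-1}(\sigma)$, a properly embedded surface in $S^2\times[0,1]$. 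Each closed component of this preimage is null-homotopic, bounds a disc on one side, and using normal position at both ends one shows the innermost such discs cannot be joined by ``tubes'' to the boundary without producing a bigon in $\tau$ or $\tau''$ — contradicting normality. I expect to lean on Proposition 1.1 of \cite{Hat} and Lemma 7.2 of \cite{HenOP} to get started, but the self-contained version requires pushing this tube/disc analysis through.

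The main obstacle will be exactly this converse: passing from ``no bigons'' (a local, combinatorial condition on a single pair of realisations) to ``globally minimal intersection number'' (a statement quantified over the entire homotopy class). The difficulty is that a naively chosen homotopy between two normal realisations can sweep $\sigma$ in complicated ways, and controlling $G^{-1}(\sigma)\subseteq S^2\times[0,1]$ requires a careful innermost-component induction — precisely the kind of argument already carried out in the proof of Lemma \ref{C4} for the tube analysis, which I would adapt. An alternative, cleaner route (which I would mention and perhaps prefer) is simply to invoke the equivalence with the \cite{HenOP} notion together with known results there, or the discussion in \cite{GP}, reducing the converse to a citation; the hands-on proof is included only ``for the sake of completeness'' as the surrounding text already signals.
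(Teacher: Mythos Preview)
Your proposal diverges from the paper's argument in both directions, and the ``easy'' direction contains a real gap.

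For the implication ``not normal $\Rightarrow$ not minimal'', your 3-bigon case is fine and matches the paper. But your 2-bigon case does not go through as written. You assert that the null-homotopic loop $a\cup b$ ``bounds a disc'' and that one may homotope $\tau$ across it to remove the two intersection curves. The manifold $M$ is a connected sum of copies of $S^1\times S^2$; it is \emph{not} aspherical, and $\pi_2(M)$ is far from trivial (the same is true of $\tilde M$). So a null-homotopic loop need not bound an embedded disc, and even a singular disc it bounds may cross $\sigma$ many times, so the homotopy you describe could create new intersection curves elsewhere. Your parenthetical hedge (``$\pi_2$ of the relevant cover is controlled'') does not rescue this. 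The paper in fact never attempts to remove a 2-bigon by a local surgery.

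The paper's proof is organised quite differently. After disposing of 3-bigons by innermost surgery (as you do), it lifts to the universal cover $\tilde M$ and fixes a single lift $s$ of $\sigma$. Writing $T_0(\tau)$ for the set of lifts of $\tau$ meeting $s$, one has $|T_0(\tau)|\le |C(\sigma,\tau)|$, with equality precisely when there are no 2-bigons (this is condition (N1)). The key observation is that if $s$ and a lift $t$ meet in exactly one curve, then $s$ and $t$ separate the ends of $\tilde M$ in a non-nested way, and this end-separation pattern is a homotopy invariant of $t$; hence every sphere homotopic to $t$ still meets $s$. This gives $|T_0(\tau)|\le |T_0(\tau')|\le |C(\sigma,\tau')|$ for any $\tau'$ homotopic to $\tau$, proving ``normal $\Rightarrow$ minimal'' directly. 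The converse then follows formally: given a minimal $\tau'$, compare with some normal $\tau$ (which exists by Lemma~\ref{C1}); the chain of inequalities collapses to equalities, forcing $|T_0(\tau')|=|C(\sigma,\tau')|$, hence no 2-bigons for $\tau'$ either.

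So the paper's route avoids both your problematic 2-bigon surgery and your proposed cobordism analysis in $S^2\times[0,1]$: the single homotopy-invariant quantity $|T_0(\cdot)|$ handles both implications for 2-bigons at once. Your cobordism sketch for the hard direction might be made to work, but as you yourself note it is only an outline, and the paper's ends-of-$\tilde M$ argument is both shorter and self-contained.
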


\begin{proof}
We first observe that if there are no innermost 3-bigons,
then there no 3-bigons at all.
Moreover, we can always surger out any innermost 3-bigon and reduce
$ |C(\sigma,\tau)| $.
We can therefore assume that there are never any 3-bigons.

To conclude the proof, we lift to the universal cover $ {\tilde M} $.
(This is a 3-sphere minus an unknotted Cantor set.)
Let $ s \subseteq {\tilde M} $ be any component of the lift of $ \sigma $,
which, as noted above, we can assume to be fixed.
Let $ T(\tau) $ be the set of lifts of $ \tau $, and let
$ T_0(\tau) = \{ t \in T(\tau) \mid t \cap s \ne \varnothing \} $.
Note that $ |T_0(\tau)| \le |C(\sigma,\tau)| $,
with equality if and only if there are no 2-bigons.

Suppose first, that there are no 2-bigons.
In this case, we see that if $ t \in T_0(\tau) $,
then $ t' \cap s \ne \varnothing $ for any 2-sphere, $ t' $, homotopic
to $ t $ in $ {\tilde M} $.
(This follows since the sets of ends of $ {\tilde M} $ separated
by $ s $ and $ t $ are non-nested: they pairwise intersect in
four non-empty sets.
But these sets do not change on replacing $ t $ by $ t' $.)
In particular, if $ \tau' $ is homotopic to $ \tau $ in $ M $, then
$$ |C(\sigma,\tau)| = |T_0(\tau)| \le |T_0(\tau')| \le |C(\sigma,\tau')| .$$
Therefore, $ \tau $ minimises $ |C(\sigma,\tau)| $.
In other words, $ \kappa(\sigma,\tau) = C(\sigma,\tau) $.
This shows that normal position implies minimality.

Conversely, suppose that $ \sigma,\tau' $ satisfy
$ |C(\sigma,\tau')| = \kappa(\sigma,\tau') $.
Now, by Lemma \ref{C1},
we can certainly choose $ \tau $ so
that $ \sigma,\tau $ has no bigons.
But now, the above inequality must be an equality.
In particular, $ |T_0(\tau')| = |C(\sigma,\tau')| $, so there
are no bigons in $ \sigma, \tau' $ either.
\end{proof}

\end{document}